\documentclass[oneside,english,american]{amsart}
\usepackage[T1]{fontenc}
\usepackage[utf8]{inputenc}
\usepackage{geometry}
\geometry{verbose}
\usepackage{float}
\usepackage{amstext}
\usepackage{amsthm}
\usepackage{amssymb}
\usepackage{graphicx}

\makeatletter
\numberwithin{equation}{section}
\numberwithin{figure}{section}
\theoremstyle{plain}
\newtheorem{thm}{\protect\theoremname}
  \theoremstyle{plain}
  \newtheorem{lem}[thm]{\protect\lemmaname}
  \theoremstyle{remark}
  \newtheorem{rem}[thm]{\protect\remarkname}
  \theoremstyle{plain}
  \newtheorem{prop}[thm]{\protect\propositionname}
  \theoremstyle{plain}
  \newtheorem{conjecture}[thm]{\protect\conjecturename}

\date{\today}\usepackage{babel}
\usepackage{babel}

\usepackage{babel}
\providecommand{\conjecturename}{Conjecture}
  \providecommand{\lemmaname}{Lemma}
  \providecommand{\propositionname}{Proposition}
  \providecommand{\remarkname}{Remark}
\providecommand{\theoremname}{Theorem}

\makeatother

\usepackage{babel}
  \addto\captionsamerican{\renewcommand{\conjecturename}{Conjecture}}
  \addto\captionsamerican{\renewcommand{\lemmaname}{Lemma}}
  \addto\captionsamerican{\renewcommand{\propositionname}{Proposition}}
  \addto\captionsamerican{\renewcommand{\remarkname}{Remark}}
  \addto\captionsamerican{\renewcommand{\theoremname}{Theorem}}
  \addto\captionsenglish{\renewcommand{\conjecturename}{Conjecture}}
  \addto\captionsenglish{\renewcommand{\lemmaname}{Lemma}}
  \addto\captionsenglish{\renewcommand{\propositionname}{Proposition}}
  \addto\captionsenglish{\renewcommand{\remarkname}{Remark}}
  \addto\captionsenglish{\renewcommand{\theoremname}{Theorem}}
  \providecommand{\conjecturename}{Conjecture}
  \providecommand{\lemmaname}{Lemma}
  \providecommand{\propositionname}{Proposition}
  \providecommand{\remarkname}{Remark}
\providecommand{\theoremname}{Theorem}

\begin{document}

\title[Polynomials with rational generating functions]{On the zeros of polynomials generated by rational functions with a hyperbolic polynomial type denominator}

\author{Tam\'as Forg\'acs\\
 Khang Tran}
\begin{abstract}
This paper investigates the location of the zeros of a sequence of
polynomials generated by a rational function with a denominator of
the form $G(z,t)=P(t)+zt^{r}$, where the zeros of $P$ are positive and
real. We show that every member of a family of such generating functions
- parametrized by the degree of $P$ and $r$ - gives rise to a sequence
of polynomials $\{H_{m}(z)\}_{m=0}^{\infty}$ that is eventually hyperbolic.
Moreover, when $P(0)>0$ the real zeros of the polynomials $H_{m}(z)$
form a dense subset of an interval $I\subset\mathbb{R}^{+}$, whose
length depends on the particular values of the parameters in the generating
function.\\
 \textbf{MSC:} 30C15, 26C10, 11C08  
\end{abstract}

\maketitle

\section{Introduction}

Consider the generating relation 
\[
\sum_{k=0}^{\infty}H_{m}(z)t^{m}=G(z,t)
\]
for a sequence of polynomials $\{H_{m}(z)\}_{m=0}^{\infty}$. For
certain specific choices of the function $G(z,t)$, one can derive
various properties of the polynomials $H_{m}(z)$ from those of the
function $G(z,t)$, including their degrees and their location of
zeros. For example, well-known properties of the classical orthogonal
polynomials\footnote{i.e. the Hermite-, Laguerre-, and the Legendre (more generally the
Jacobi) polynomials} can be obtained from by manipulating their generating functions to
produce recurrence relations satisfied by these polynomials. The recurrence
relations are in turn used to establish the orthogonality of the polynomials
over a certain interval in $\mathbb{R}$ with respect to some weight
function \cite[Ch. 10, 11, 12, 16]{rainville}. In addition to orthogonality,
one also discovers that the generated set of polynomials is \textit{simple},
that is, $\deg H_{m}(z)=m$ for all $m\geq0$. It is through these
connections that one can assert the reality of the zeros of each of
the polynomials in the generated sequence (see for example \cite[p.149, Theorem 55]{rainville}),
along with the fact that the zeros must lie in the interval on which
the family of polynomials is orthogonal. \\
 \indent The goal of the present paper is to describe the extent
to which similar conclusions can be drawn for polynomials generated
by functions of the form 
\[
(\dag)\qquad G(z,t)=\frac{1}{P(t)+zt^{r}},
\]
where $P\in\mathbb{R}[t]$ is a polynomial of degree $n$ with only
positive (real) zeros, and $r$ is a positive integer. We note that
if $n=2$ and $r=1$, a result by the second author shows that he
zeros of the generated polynomials are all real (see \cite[\S 2, Theorem 1, p.\,331]{tran}).
The main result of the present paper\footnote{For pairs $(n,r)\neq(1,1)$ or $(2,1)$}
(cf. Theorem \ref{maintheorem}) is that from a certain point on in
the sequence, polynomials generated by functions of the type $(\dag)$
will all have only real zeros, all of which are located in a particular
interval in $\mathbb{R}$. We find this result appealing in that it
closely resembles the long-established analogous conclusions about
the classical orthogonal polynomials, despite us not knowing explicitly
what the generated sequence of polynomials looks like. \\
 \indent Given that the degree of the denominator of $G(z,t)$ in
$t$ is $d=\max\{n,r\}$, we are assured that the terms of the generated
sequence $\{H_{m}(z)\}_{m=0}^{\infty}$ satisfy an $(d+1)$-term recurrence
relation. We could not, however, find a way to ascertain that they
satisfy a three-term recurrence relation\footnote{a necessary condition for the sequence $\{H_{m}(z)\}_{m=0}^{\infty}$
to be orthogonal (see \cite[Theorem 57, p.151]{rainville})}. Thus the techniques described above for obtaining orthogonality
relations for the generated polynomials do not readily lend themselves
to the solution of our problem. Moreover, the set of polynomials we
obtain using such functions are, in general, not simple. In light
of these obstructions, we construct a proof which establishes that
each $H_{m}(z)$ has at least as many distinct real zeros as its degree.
We accomplish this by first proving that there exists a continuous,
real valued, strictly increasing function $z(\theta)$ on $(0,\pi/r)$,
when viewed as a function of the argument of a non-real zero of $P(t)+zt^{r}$
(Lemma \ref{lem:zfunctheta} and Lemma \ref{lem:zonto}). We then
construct continuous functions $H(\theta;m)$ on $(0,\pi/r)$ with
the property that $H(\theta;m)=0$ if and only if $H_{m}(z(\theta))=0$.
Finally, we demonstrate that for all large $m$, $H(\theta;m)$ has
at least as many zeros on $(0,\pi/r)$ as the degree of $H_{m}(z)$
(Proposition \ref{prop:zerosQ} and Proposition \ref{prop:signchangegeneral}).
Since each of these zeros will give rise to a unique real zero of
$H_{m}(z)$, we obtain that $H_{m}(z)$ must in fact have only real
zeros for all $m\gg1$. The identification of the interval containing
the zeros of $H_{m}(z)$ for large $m$ is accomplished in Lemma \ref{lem:existenceinterval}
and Lemma \ref{lem:zonto}. The interval we obtain is optimal in the
sense that the set 
\[
\mathcal{Z}=\bigcup_{m\gg1}\{z\ |\ H_{m}(z)=0\}
\]
is dense there. \\
 \indent With this outline in hand, we now present the main result
of the paper. 
\begin{thm}
\label{maintheorem} Suppose $P(t)$ is a real polynomial whose zeros
are positive real numbers and $P(0)>0$. Let $r$ be a positive integer
such that $\max\{\deg P,r\}>1$. Let $t_{a}$ and $t_{b}$ be the
smallest positive, and the largest nonpositive real zeros of the polynomial
${\displaystyle {\frac{d}{dx}(-P(t)/t^{r})}}$ respectively. For all
large integers $m$, the zeros of the polynomial $H_{m}(z)$ generated
by 
\begin{equation}
\sum_{m=0}^{\infty}H_{m}(z)t^{m}=\frac{1}{P(t)+zt^{r}}\label{eq:genrel}
\end{equation}
lie in the interval $(a,b)$, where $a=-P(t_{a})/t_{a}^{r}$ and 
\[
b=\begin{cases}
-P(t_{b})/t_{b} & \text{ if }t_{b}\ne0\\
\infty & \text{ if }t_{b}=0
\end{cases}.
\]
Moreover, the set $\mathcal{Z}=\bigcup_{m\gg1}\{z\ |\ H_{m}(z)=0\}$
is dense on $(a,b)$. 
\end{thm}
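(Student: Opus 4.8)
The plan is to convert the question of locating the complex zeros of $H_m(z)$ into a one-dimensional zero-counting problem on the interval $(0,\pi/r)$, using the parametrization of the non-real zeros of the denominator $P(t)+zt^{r}$ supplied by Lemma \ref{lem:zfunctheta} and Lemma \ref{lem:zonto}. Before doing so I would pin down the degree of the generated polynomials. Expanding
\[
\frac{1}{P(t)+zt^{r}}=\frac{1}{P(t)}\sum_{j\ge0}(-z)^{j}\frac{t^{rj}}{P(t)^{j}}
\]
(legitimate since $P(0)>0$) and reading off the coefficient of $t^{m}$ gives $H_m(z)=\sum_{j}(-z)^{j}[t^{m-rj}]P(t)^{-(j+1)}$, so that $\deg_z H_m=\lfloor m/r\rfloor$ once $m$ is large enough for the top coefficient to survive. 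Consequently it suffices to exhibit $\lfloor m/r\rfloor$ distinct real zeros of $H_m$ inside $(a,b)$.

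Next I would pass to the residue/partial-fraction representation
\[
H_m(z)=-\sum_{j}\frac{1}{Q'(t_j)\,t_j^{\,m+1}},\qquad Q(t)=P(t)+zt^{r},
\]
and restrict to $z=z(\theta)$. For these values the two zeros of smallest modulus form a conjugate pair $\rho e^{\pm i\theta}$, and isolating their contribution yields (after normalization) the real continuous function $H(\theta;m)$ of Proposition \ref{prop:zerosQ}, which asymptotically behaves like $\cos\bigl(\varphi(\theta)-(m+1)\theta\bigr)$ and which vanishes precisely when $H_m(z(\theta))=0$. As $\theta$ sweeps $(0,\pi/r)$ the rapidly varying phase $(m+1)\theta$ traverses an interval of length $(m+1)\pi/r$, producing on the order of $(m+1)/r$ sign changes.

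The heart of the matter—and the step I expect to be the main obstacle—is turning this heuristic into the rigorous lower bound of Proposition \ref{prop:zerosQ} and Proposition \ref{prop:signchangegeneral}: for all large $m$, $H(\theta;m)$ must have at least $\lfloor m/r\rfloor$ zeros on $(0,\pi/r)$. The difficulty is twofold. One must control the subdominant zeros of $Q$, whose contributions, although exponentially smaller, could in principle annihilate sign changes near the ends of the interval; and one must understand the degenerate limits $\theta\to0^{+}$ and $\theta\to(\pi/r)^{-}$, where the conjugate pair coalesces into a double real zero at $t_a$ or $t_b$, which is exactly the mechanism fixing the endpoints $a=-P(t_a)/t_a^{r}$ and $b$ as critical values of $-P(t)/t^{r}$. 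Granting these propositions, the monotonicity in Lemma \ref{lem:zfunctheta} together with Lemma \ref{lem:zonto} and Lemma \ref{lem:existenceinterval} makes $\theta\mapsto z(\theta)$ an increasing bijection onto $(a,b)$, so the $\lfloor m/r\rfloor$ distinct zeros of $H(\cdot;m)$ give $\lfloor m/r\rfloor$ distinct real zeros of $H_m$ in $(a,b)$; by the degree count these account for every zero of $H_m$, which is therefore hyperbolic with all zeros in $(a,b)$.

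Finally, for the density assertion I would fix an arbitrary subinterval $(c,d)\subseteq(a,b)$, pull it back to $(\theta_1,\theta_2)\subseteq(0,\pi/r)$ through the increasing bijection $z(\theta)$, and note that the number of oscillations of $H(\theta;m)$ restricted to $(\theta_1,\theta_2)$ grows like $(m+1)(\theta_2-\theta_1)/\pi\to\infty$. Hence for all large $m$ the function $H(\cdot;m)$—and therefore $H_m$—has a zero in $(c,d)$, which shows that $\mathcal{Z}=\bigcup_{m\gg1}\{z\mid H_m(z)=0\}$ is dense on $(a,b)$.
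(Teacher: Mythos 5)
Your outline faithfully reproduces the paper's strategy (reduce to counting sign changes of the real function $H(\theta;m)$ along the curve $z=z(\theta)$, then compare with the degree bound), and one ingredient is actually a nice improvement: expanding $1/(P(t)+zt^r)$ as $\sum_j(-z)^jt^{rj}P(t)^{-(j+1)}$ gives the \emph{exact} degree $\lfloor m/r\rfloor$ for every $m$, whereas the paper's Lemma \ref{lem:degreeH} only proves the upper bound via the recurrence \eqref{eq:recurrence}. (To close that step you should note that $[t^s]P(t)^{-(j+1)}>0$ for all $s\ge0$, which follows because $P(t)^{-(j+1)}$ is, up to a positive constant, a product of factors $(1-t/\tau_k)^{-(j+1)}$ with positive Taylor coefficients; the upper bound alone would in any case suffice for the argument.)

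However, the proposal is not a proof: it explicitly grants Propositions \ref{prop:zerosQ} and \ref{prop:signchangegeneral}, and those two statements \emph{are} the theorem, in the sense that essentially all of the paper's technical work goes into them. Concretely, your heuristic that $H(\theta;m)$ behaves like $\cos\bigl(\varphi(\theta)-(m+1)\theta\bigr)$ and therefore produces "on the order of $(m+1)/r$" sign changes has two holes that the paper spends most of its length repairing. First, the dominant (conjugate-pair) contribution is $A(\theta)\cos(m+r)\theta-B(\theta)\sin(m+r)\theta$ as in \eqref{eq:mainterm}, and to extract the sign $(-1)^h$ at the test points $\theta=h\pi/(m+r)$ one needs $A(\theta)>0$ on $(0,\pi/r)$; this is Lemma \ref{lem:Athetapositive}, a delicate trigonometric inequality (and precisely where the excluded case $(n,r)=(2,1)$ enters), together with the quantitative lower bounds $A(\theta)>\theta^4$, $A(\theta)>(\pi-\theta)^4$ of Lemma \ref{lem:AthetaAsymp} near the endpoints. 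Without positivity of $A$, the amplitude of your cosine can vanish and sign changes can be destroyed. Second, "order of $m/r$" is not enough: since $\deg H_m\le\lfloor m/r\rfloor$, you must produce \emph{at least} $\lfloor m/r\rfloor$ distinct zeros, which forces you to control signs all the way up to the endpoint $\pi/r$ (part (ii) of Proposition \ref{prop:signchangegeneral}) and through the degenerate regimes $\theta\to0^+$ and $\theta\to(\pi/r)^-$, where the subdominant zeros $\zeta_k$ of $Q$ approach the unit circle, the exponential domination of the conjugate-pair term is no longer uniform, and the paper must carry out the separate asymptotic analyses of Cases 2 and 3 (including the geometric argument of Proposition \ref{prop:zerodenom} that the remaining zeros of $P(t)+zt^r$ lie outside the disk of radius $\tau(\theta)$). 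Naming these difficulties, as you do, is not the same as resolving them; as written, the argument establishes the theorem only modulo the two propositions it was supposed to prove.
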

We remark that the choice $P(t)=(1-t)^{n}$ reproduces Theorem 1 in
\cite{ft} via the equations 
\[
rt^{r-1}P(t)-t^{r}P'(t)=t^{r-1}(1-t)^{n-1}((n-r)t+r),
\]
and 
\[
(t_{b},t_{a})=\begin{cases}
(0,1) & \text{if}\ r>1,n>1\\
{\displaystyle {\left(-\frac{1}{n-1},1\right)}} & \text{if}\ 1=r<n\\
{\displaystyle {\left(0,\frac{r}{r-1}\right)}} & \text{if}\ 1=n<r
\end{cases}.
\]

The following lemma justifies the descriptions of $t_{a}$ and $t_{b}$
in the statement of Theorem \ref{maintheorem}. We refer the reader
to Figure \ref{fig:zfunct} for an illustration of $-P(t)/t$ and
the values $a,b,t_{a}$, and $t_{b}$. 
\begin{lem}
\label{lem:existenceinterval} Let $P(t)=\sum_{k=0}^{n}a_{k}t^{k}=|a_{n}|\prod_{k=1}^{n}(\tau_{k}-t)$
be a polynomial of degree $n$ whose zeros, $\tau_{1}\le\tau_{2}\le\cdots\le\tau_{n}$,
are positive real. The zeros of the polynomial 
\[
R(t)=t^{2r}\frac{d}{dx}(-P(t)/t^{r}))=rt^{r-1}P(t)-t^{r}P'(t)
\]
are real. Furthermore, if $r=1$, then $R(t)$ has a unique negative
zero. 
\end{lem}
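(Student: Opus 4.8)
The plan is to strip off the trivial zeros at the origin and reduce everything to a single polynomial whose zeros are governed by a monotone rational function. Writing $R(t)=t^{r-1}Q(t)$ with $Q(t)=rP(t)-tP'(t)$, the factor $t^{r-1}$ contributes only the real zero $t=0$ (of multiplicity $r-1$), so it suffices to show that $Q$ has only real zeros and, in the case $r=1$ where $R=Q$, that exactly one of them is negative. First I would record $\deg Q$ from the clean expansion $Q(t)=\sum_{j=0}^{n}(r-j)a_{j}t^{j}$ obtained from $tP'(t)=\sum_j j a_j t^j$. The coefficient of $t^{n}$ is $(r-n)a_{n}$, so $\deg Q=n$ whenever $r\ne n$; if $r=n$ the top term cancels and the coefficient of $t^{n-1}$ is $(r-n+1)a_{n-1}=a_{n-1}=(-1)^{n-1}|a_{n}|\sum_{k}\tau_{k}\ne0$ (since all $\tau_k>0$), so $\deg Q=n-1$.

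Next I would pass to the logarithmic derivative. Away from the zeros of $P$ we have $Q(t)/P(t)=r-tP'(t)/P(t)=r-\phi(t)$, where $\phi(t)=\sum_{k=1}^{n}t/(t-\tau_{k})$, so every zero of $Q$ is either a zero of $P$ or a solution of $\phi(t)=r$. The analytic heart of the argument is that $\phi'(t)=-\sum_{k}\tau_{k}/(t-\tau_{k})^{2}<0$, so $\phi$ is strictly decreasing on each component of $\mathbb{R}\setminus\{\sigma_{1},\dots,\sigma_{s}\}$, where $\sigma_{1}<\cdots<\sigma_{s}$ are the distinct zeros of $P$; moreover $\phi\to n$ as $t\to\pm\infty$ and $\phi\to\pm\infty$ at each pole. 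This yields exactly one root of $\phi(t)=r$ in each bounded gap $(\sigma_{i},\sigma_{i+1})$ (where $\phi$ runs monotonically from $+\infty$ to $-\infty$), one root in $(-\infty,\sigma_{1})$ precisely when $r<n$, and one root in $(\sigma_{s},\infty)$ precisely when $r>n$.

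I would then match this count against $\deg Q$. A zero of $P$ of multiplicity $m_{i}$ is a zero of $Q$ of multiplicity exactly $m_{i}-1$: factoring $P=(t-\sigma_{i})^{m_{i}}\tilde{P}$ with $\tilde{P}(\sigma_{i})\ne0$, one checks $Q=(t-\sigma_{i})^{m_{i}-1}\bigl[r(t-\sigma_i)\tilde P-m_i t\tilde P-t(t-\sigma_i)\tilde P'\bigr]$ whose bracket equals $-m_{i}\sigma_{i}\tilde{P}(\sigma_{i})\ne0$ at $t=\sigma_{i}$. These contribute $\sum_{i}(m_{i}-1)=n-s$ real zeros, while the gap roots contribute $s-1$ simple real zeros and the tail root contributes one more exactly when $r\ne n$. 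The total is $n$ real zeros if $r\ne n$ and $n-1$ if $r=n$, matching $\deg Q$ in each case; since every zero has thereby been exhibited as real, $Q$ (hence $R$) has only real zeros.

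Finally, for the negative-zero claim with $r=1$ — which is the relevant regime only when $\max\{n,r\}>1$ forces $n\ge2$, and hence $r<n$ — the unique root of $\phi(t)=1$ in $(-\infty,\sigma_{1})$ exists, and evaluating $\phi(0)=\sum_{k}0/(0-\tau_{k})=0<1=r$ together with the strict decrease of $\phi$ on the interval containing $0$ places this root in $(-\infty,0)$. Every other zero of $Q$ is positive (the gap roots lie in $(\sigma_{i},\sigma_{i+1})\subset\mathbb{R}^{+}$, and the multiple-zero contributions sit at the positive $\sigma_{i}$), so the negative zero is unique. I expect the main obstacle to be the multiplicity bookkeeping — confirming that a multiplicity-$m_{i}$ zero of $P$ drops to multiplicity $m_{i}-1$ in $Q$ and that the tail root is absent in the degenerate case $r=n$ — because the reality conclusion depends on the zero count being \emph{exact} rather than merely a lower bound.
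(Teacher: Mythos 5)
Your proof is correct, but it runs along a genuinely different track from the paper's. The paper never leaves the polynomial $R$ itself: it invokes the interlacing of the zeros $\tau_k$ of $P$ and $\gamma_k$ of $P'$, applies the Intermediate Value Theorem to $R$ on each nontrivial interval $(\tau_k,\gamma_k)$, absorbs multiple zeros via the observation that common zeros of $P$ and $P'$ are zeros of $R$, notes the zero of multiplicity $r-1$ at the origin, and finishes by degree considerations; for $r=1$ the negative zero comes from $P(0)>0$ together with $P(t)-tP'(t)\to-\infty$ as $t\to-\infty$. You instead divide out $P$ and study the logarithmic-derivative function $\phi(t)=tP'(t)/P(t)=\sum_k t/(t-\tau_k)$, whose strict monotonicity between poles (using $\tau_k>0$) localizes one root of $\phi=r$ in each gap between distinct zeros of $P$, plus a tail root exactly when $r\ne n$; zeros of $P'$ and interlacing never appear. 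What your route buys is precision the paper glosses over: you get the exact multiplicity drop ($m_i\mapsto m_i-1$) at multiple zeros of $P$, simplicity of the remaining zeros, the exact location of every zero, and — importantly — you catch the degenerate case $r=n$ where the leading term of $Q=rP-tP'$ cancels and $\deg Q=n-1$, a case the paper's phrase ``by degree considerations'' silently passes over (its claim $\deg R = n$ for $r=1$ likewise needs $n>1$). You also flag explicitly that the negative-zero claim requires $n\ge 2$, i.e.\ the theorem's standing hypothesis $\max\{\deg P,r\}>1$; the paper's proof has the same implicit restriction. What the paper's route buys is brevity, by outsourcing the real work to the standard interlacing fact.
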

\begin{proof}
Let $\gamma_{1},\gamma_{2},\ldots,\gamma_{n-1}$ be the zeros of $P'(t)$.
Note that the zeros of $P(t)$ and $P'(t)$ interlace, that is 
\[
\tau_{1}\le\gamma_{1}\le\tau_{2}\le\gamma_{2}\le\cdots\le\gamma_{n-1}\le\tau_{n}.
\]
Thus, by the Intermediate Value Theorem, each non-trivial interval
$(\tau_{k},\gamma_{k})$, $1\le k<n$, contains a zero of $R(t)$.
If $r>1$, then $R(t)$ has a zero at $0$ with multiplicity $r-1$,
and since common zeros of $P(t)$ and $P'(t)$ are also zeros of $R(t)$,
by degree considerations we see that all zeros of $R(t)$ are real
(and non-negative).\\
 If $r=1$, the facts that $\lim_{t\rightarrow-\infty}P(t)-tP'(t)=-\infty$
and $P(0)>0$ imply that $R(t)$ has at least one negative real zero.
On the other hand, $R(t)$ has $n-1$ non-negative real zeros by the
above considerations. Since $\deg R(t)=n$, we conclude that $R(t)$
has exactly one negative real zero. 
\end{proof}
\begin{figure}
\includegraphics{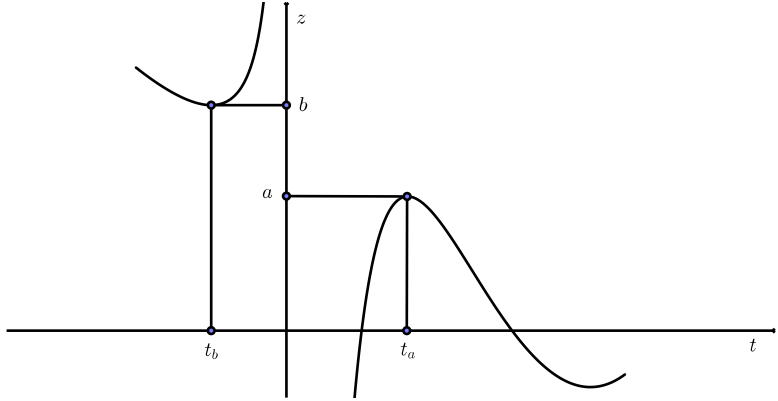}

\caption{\label{fig:zfunct}The function $-P(t)/t$ when $t\in\mathbb{R}$}
\end{figure}

The remainder of the paper is dedicated entirely to the proof of Theorem
\ref{maintheorem}, with the exception of the closing section, where
we state some open problems and conjectures which arose during our
investigations.

\section{The proof of Theorem \ref{maintheorem}}

Our approach to proving the main result is straightforward in that
we simply count the number of positive real zeros of the polynomials
$H_{m}(z)$, and show that for large $m$, this number is at least
the degree of $H_{m}(z)$. We start by giving an upper bound on the
degree of each $H_{m}(z)$. 
\begin{lem}
\label{lem:degreeH} Suppose that the sequence of polynomials ${\displaystyle {\left\{ H_{m}(z)\right\} _{m=0}^{\infty}}}$
is generated by a function $1/(P(t)+zt^{r})$, where $P\in\mathbb{R}[t]$
is a polynomial of degree $n$ with only positive (real) zeros, and
$r$ is a positive integer. For all $m\in\mathbb{N}$, the inequality
$\deg H_{m}(z)\leq\lfloor m/r\rfloor$ holds. 
\end{lem}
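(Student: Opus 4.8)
The plan is to extract a linear recurrence for the $H_m$ from the generating relation and then run a strong induction on $m$. Write $P(t)=\sum_{k=0}^{n}a_{k}t^{k}$. Since every zero of $P$ is positive, $0$ is not a zero, so $a_{0}=P(0)\neq0$. Multiplying the defining identity $\sum_{m\ge0}H_{m}(z)t^{m}=1/(P(t)+zt^{r})$ through by $P(t)+zt^{r}$ and comparing the coefficient of $t^{m}$ on both sides, with the convention $H_{j}(z)=0$ for $j<0$, gives $a_{0}H_{0}(z)=1$ together with, for every $m\ge1$,
\[
a_{0}H_{m}(z)=-\sum_{k=1}^{n}a_{k}H_{m-k}(z)-z\,H_{m-r}(z).
\]
In particular $H_{0}(z)=1/a_{0}$ is constant, which settles the base case $\deg H_{0}=0=\lfloor0/r\rfloor$.

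For the inductive step I would assume $\deg H_{j}(z)\le\lfloor j/r\rfloor$ for all $j<m$ and bound the degree of each summand on the right-hand side above. For the terms $a_{k}H_{m-k}(z)$ with $1\le k\le n$, the inductive hypothesis together with monotonicity of the floor gives $\deg\!\left(a_{k}H_{m-k}\right)\le\lfloor(m-k)/r\rfloor\le\lfloor m/r\rfloor$, the bound being vacuously true when $m-k<0$. For the remaining term, the only one carrying an explicit factor of $z$, I would invoke the identity $\lfloor(m-r)/r\rfloor=\lfloor m/r\rfloor-1$ to obtain
\[
\deg\bigl(z\,H_{m-r}(z)\bigr)\le1+\Bigl\lfloor\tfrac{m-r}{r}\Bigr\rfloor=\Bigl\lfloor\tfrac{m}{r}\Bigr\rfloor.
\]
Since $a_{0}\neq0$, the degree of $H_{m}$ is at most the maximum of these bounds, namely $\lfloor m/r\rfloor$, which completes the induction.

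There is no serious obstacle here; the content is entirely in the bookkeeping. The one point that requires care is the term $z\,H_{m-r}(z)$: multiplication by $z$ raises the $z$-degree by one, and this is exactly compensated by the unit drop $\lfloor(m-r)/r\rfloor=\lfloor m/r\rfloor-1$ in the floor. Conceptually this reflects the fact that in the geometric-series expansion of $1/(P(t)+zt^{r})$ every factor of $z$ is accompanied by a factor $t^{r}$, so a monomial $z^{i}t^{m}$ can occur only when $m\ge ir$; I could equally well organize the argument around that expansion, but the recurrence makes the floor estimate the cleanest. I would also remark that the convention $H_{j}\equiv0$ for $j<0$ renders the displayed recurrence valid uniformly for all $m\ge1$, so no separate treatment of the small cases $1\le m<\max\{n,r\}$ is needed beyond the base case.
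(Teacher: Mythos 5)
Your proposal is correct and follows essentially the same route as the paper: both extract the recurrence $a_{0}H_{m}(z)=-\sum_{k=1}^{n}a_{k}H_{m-k}(z)-zH_{m-r}(z)$ (the paper writes it in operator form as $(P(\Delta)+z\Delta^{r})H_{m}(z)=0$ with $\Delta H_{m}:=H_{m-1}$) and then induct on $m$. Your write-up simply makes explicit the floor-function bookkeeping, in particular the step $\deg(zH_{m-r})\le1+\lfloor(m-r)/r\rfloor=\lfloor m/r\rfloor$, which the paper leaves to the reader.
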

\begin{proof}
Rearranging (\ref{eq:genrel}) yields the equation 
\[
(P(t)+zt^{r})\sum_{m=0}^{\infty}H_{m}(z)t^{m}=1.
\]
By equating coefficients we see that the polynomial $H_{m}(z)$ satisfies
the recurrence 
\begin{equation}
(P(\Delta)+z\Delta^{r})H_{m}(z)=0,\qquad m\ge1\label{eq:recurrence}
\end{equation}
where the operator $\Delta$ is defined by $\Delta H_{m}:=H_{m-1}$,
and $\Delta H_{0}=0$. The claim follows from induction. 
\end{proof}
A key component of the proof of Theorem \ref{maintheorem} is the
connection between the zeros of $P(t)+zt^{r}$ (as a polynomial in
$t$) and the zeros of the generated sequence of polynomials $\{H_{m}(z)\}_{m=0}^{\infty}$.
The next segment of the paper starts the exploration of this connection.
More precisely, we proceed by demonstrating that given a $\theta\in(0,\pi/r)$,
one can find a unique $\tau\in\mathbb{R}^{+}$, such that $t=\tau e^{-i\theta}$
is a zero of $P(t)+zt^{r}$ for some real $z$. This way, we will
be able construct a function $z:(0,\pi/r)\to\mathbb{R}^{+}$, that
will play an instrumental role in the proof of Theorem \ref{maintheorem}.\\
 \indent In order to motivate some of the specifics of the ensuing
section, we offer the following development. Suppose $P(t)=\sum_{k=0}^{n}a_{k}t^{k}$
is a real polynomial with positive zeros $\tau_{1}\leq\tau_{2}\leq\cdots\leq\tau_{n}$.
If $z\in\mathbb{R}$, and $t=|t|e^{-i\theta}$, $\theta\in\mathbb{R}$,
is a zero of $P(t)+zt^{r}$, then so is $te^{2i\theta}$. For non-zero
values of $\theta$ we substitute $t$ and $te^{2i\theta}$ into the
equation $P(t)+zt^{r}=0$ to obtain 
\begin{equation}
\prod_{k=1}^{n}\frac{\tau_{k}-te^{2i\theta}}{\tau_{k}-t}=e^{2ir\theta}.\label{eq:proddistances}
\end{equation}
We define the angles $\theta_{k}$ implicitly by 
\begin{equation}
\frac{\tau_{k}-te^{2i\theta}}{\tau_{k}-t}=e^{2i\theta_{k}},\qquad k=1,2,\ldots,n.\label{eq:theta_kdef}
\end{equation}
Note that 
\begin{equation}
\sum_{k=1}^{n}\theta_{k}=r\theta+l\pi,\quad\text{for some}\quad0\leq l<n.\label{eq:sumthetak}
\end{equation}
Solving equation (\ref{eq:theta_kdef}) for $t$ we obtain that for
$k=1,2,\ldots,n$, 
\begin{equation}
t=\tau_{k}\frac{1-e^{2i\theta_{k}}}{e^{2i\theta}-e^{2i\theta_{k}}}=\tau_{k}e^{-i\theta}\frac{e^{-i\theta_{k}}-e^{i\theta_{k}}}{e^{i(\theta-\theta_{k})}-e^{-i(\theta-\theta_{k})}}=\tau_{k}\frac{\sin\theta_{k}}{\sin(\theta_{k}-\theta)}e^{-i\theta}.\label{eq:t0form}
\end{equation}
The quantity $\tau={\displaystyle {\tau_{k}\frac{\sin\theta_{k}}{\sin(\theta_{k}-\theta)}}}$
is the unique real number $\tau$ that we mentioned above - its existence
is established in Lemma \ref{lem:thetatuple}. Using the expression
for $t$ given in (\ref{eq:t0form}) we arrive at 
\begin{equation}
\tau_{k}-t=\tau_{k}\frac{-\cos\theta_{k}\sin\theta+i\sin\theta_{k}\sin\theta}{\sin(\theta_{k}-\theta)}=-\tau_{k}\frac{\sin\theta}{\sin(\theta_{k}-\theta)}e^{-i\theta_{k}}.\label{eq:taukt0diff}
\end{equation}
Finally, we conclude that if $t$ is a zero of $P(t)+zt^{r}$, then
for every $1\le k\le n$, we may write 
\begin{equation}
z=-\frac{|a_{n}|\prod_{k=1}^{n}(\tau_{k}-t)}{t^{r}}=|a_{n}|(-1)^{n-l-1}\frac{\sin^{n}\theta\sin^{r}(\theta_{k}-\theta)}{\tau_{k}^{r}\sin^{r}\theta_{k}}\prod_{j=1}^{n}\frac{\tau_{j}}{\sin(\theta_{j}-\theta)}.\label{eq:ztheta}
\end{equation}
Note in particular, that given a $\theta\in(0,\pi/r)$ and an $1\leq l<n$,
we may define a real $z$ as in (\ref{eq:ztheta}), so that $\tau e^{-i\theta}$
is a zero of $P(t)+zt^{r}$.\\
 \indent Motivational interlude aside, we now turn our attention
to the details of establishing the existence and the properties of
the function $z(\theta)$. 
\begin{lem}
\label{lem:zerosRdisct} Let $P(t)=\sum_{k=0}^{n}a_{k}t^{k}=|a_{n}|\prod_{k=1}^{n}(\tau_{k}-t)$
be a real polynomial of degree $n$, whose zeros $\tau_{1}\le\tau_{2}\le\cdots\le\tau_{n}$
are positive real. For any $\theta\in\mathbb{R}\setminus\{0\}$ and
$r\in\mathbb{N}$, the zeros of the polynomial 
\begin{equation}
S(\tau)=P(\tau e^{i\theta})-e^{2ir\theta}P(\tau e^{-i\theta})\label{eqn:Rtau}
\end{equation}
are real and distinct. 
\end{lem}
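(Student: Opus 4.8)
The plan is to reduce $S(\tau)$ to a real polynomial whose real zeros can be counted by an argument (winding) argument. Expanding $P(t)=\sum_k a_k t^k$ and factoring out $e^{ir\theta}$, one finds $e^{ik\theta}-e^{i(2r-k)\theta}=-2ie^{ir\theta}\sin((r-k)\theta)$, so that $S(\tau)=-2ie^{ir\theta}\,\tilde S(\tau)$ with $\tilde S(\tau)=\sum_{k=0}^n a_k\sin((r-k)\theta)\tau^k$ a real polynomial; equivalently, for real $\tau$, $\tilde S(\tau)=-\mathrm{Im}\big(e^{-ir\theta}P(\tau e^{i\theta})\big)$. Since $S$ and $\tilde S$ have the same zeros, it suffices to treat $\tilde S$. (If $\theta\in\pi\mathbb{Z}$ then $\tilde S\equiv 0$ identically, so the statement is understood for $\sin\theta\neq 0$; this is automatic in the regime $\theta\in(0,\pi/r)$ relevant to Theorem \ref{maintheorem}.)

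Writing $g(\tau)=e^{-ir\theta}P(\tau e^{i\theta})=|a_n|e^{-ir\theta}\prod_{k=1}^n(\tau_k-\tau e^{i\theta})$, I would track a continuous branch of its argument $\Phi(\tau)=-r\theta+\sum_{k=1}^n\arg(\tau_k-\tau e^{i\theta})$, which is well defined since each factor $\tau_k-\tau e^{i\theta}$ is nonvanishing for real $\tau$ (as $\tau_k>0$ while $\tau e^{i\theta}$ is non-real for $\tau\neq 0$). The key computation is $\frac{d}{d\tau}\arg(\tau_k-\tau e^{i\theta})=\mathrm{Im}\frac{-e^{i\theta}}{\tau_k-\tau e^{i\theta}}=\frac{-\tau_k\sin\theta}{|\tau_k-\tau e^{i\theta}|^2}$, which has constant sign because $\tau_k>0$ and $\sin\theta\neq 0$. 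Hence $\Phi$ is strictly monotone. Inspecting the limits $\tau\to\mp\infty$ shows each $\arg(\tau_k-\tau e^{i\theta})$ moves from $\theta$ to $\theta\mp\pi$, so $\Phi$ is a strictly monotone bijection from $\mathbb{R}$ onto an open interval of length exactly $n\pi$ having $(n-r)\theta$ as one endpoint.

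Since $|g(\tau)|>0$ for all real $\tau$, we may write $\tilde S(\tau)=-|g(\tau)|\sin\Phi(\tau)$, so $\tilde S$ vanishes exactly when $\Phi(\tau)\in\pi\mathbb{Z}$. Because $\Phi$ is a strictly monotone bijection onto an open interval of length $n\pi$, it attains each such value at most once and transversally (there $\frac{d}{d\tau}\sin\Phi=\Phi'\cos\Phi\neq 0$); thus every real zero of $\tilde S$ is simple, the real zeros are distinct, and their number equals the count of integer multiples of $\pi$ strictly inside the range of $\Phi$. This count is $n$ when $(n-r)\theta\notin\pi\mathbb{Z}$ and $n-1$ when $(n-r)\theta\in\pi\mathbb{Z}$.

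Finally I would match this count against $\deg\tilde S$. The leading coefficient of $\tilde S$ is $-a_n\sin((n-r)\theta)$, nonzero precisely when $(n-r)\theta\notin\pi\mathbb{Z}$; in that case $\deg\tilde S=n$ equals the number of distinct real zeros, so $\tilde S$, and hence $S$, has only real and distinct zeros. The delicate point — the main obstacle — is the degenerate case $(n-r)\theta\in\pi\mathbb{Z}$, where the top coefficient drops out and one must verify the degree falls by exactly one. Here I would examine the coefficient of $\tau^{n-1}$, namely $a_{n-1}\sin((r-n+1)\theta)$: the hypothesis $(n-r)\theta\in\pi\mathbb{Z}$ forces $\sin((r-n+1)\theta)=\pm\sin\theta\neq 0$, while $a_{n-1}=|a_n|(-1)^{n-1}\sum_k\tau_k\neq 0$ because every $\tau_k>0$. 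Hence $\deg\tilde S=n-1$, again matching the number of distinct real zeros found. In both cases the number of real zeros equals the degree, which forces all zeros to be real and distinct and completes the proof.
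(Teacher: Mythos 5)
Your proof is correct, and it takes a genuinely different route from the paper's. The paper argues in two separate steps: reality of the zeros follows from a geometric observation --- if $\tau\notin\mathbb{R}$ were a zero, the positive points $\tau_1,\dots,\tau_n$ would all lie strictly on one side of the perpendicular bisector of the segment joining $\tau e^{i\theta}$ and $\tau e^{-i\theta}$, so the product $\prod_{k}(\tau_k-\tau e^{i\theta})/(\tau_k-\tau e^{-i\theta})$ could not have modulus $1$ --- while simplicity follows from pairing the product equation for $P$ with the analogous one for $P'$ (obtained from $S(\tau)=S'(\tau)=0$) and deriving an angle-sum contradiction from the interlacing of the zeros of $P$ and $P'$. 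You instead extract the real polynomial $\tilde S(\tau)=-\operatorname{Im}\bigl(e^{-ir\theta}P(\tau e^{i\theta})\bigr)$, prove that a continuous argument $\Phi$ of $e^{-ir\theta}P(\tau e^{i\theta})$ is strictly monotone on $\mathbb{R}$ with range an open interval of length exactly $n\pi$, and count transversal crossings of $\pi\mathbb{Z}$ --- a Hermite--Biehler-flavored winding argument that never mentions $P'$. What your route buys: an exact count of the real zeros ($n$, or $n-1$ in the degenerate case $(n-r)\theta\in\pi\mathbb{Z}$, which occurs for every $\theta$ when $n=r$), at the cost of the degree bookkeeping you carry out for the leading coefficient; moreover your computation $\Phi'\neq0$ is precisely the monotonicity of the angle sum in $\tau$ that the paper records in Remark \ref{rem:anglesumdecrease} as a consequence of the lemma, and it directly delivers the existence-and-uniqueness of $\tau^{*}$ for each $l$ invoked at the start of the proof of Lemma \ref{lem:thetatuple}. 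What the paper's route buys: no degree analysis at all, since it shows every zero is real and every real zero is simple without counting, so the degenerate leading-coefficient case never needs to be isolated. One further point in your favor: you correctly flag that for $\theta\in\pi\mathbb{Z}\setminus\{0\}$ one has $S\equiv0$, so the lemma implicitly requires $\sin\theta\neq0$; the paper's proof silently assumes this as well (its bisector argument needs $\tau e^{i\theta}\neq\tau e^{-i\theta}$), and the assumption is harmless since the lemma is only ever applied with $\theta\in(0,\pi/r)$.
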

\begin{proof}
We first show that the zeros of $S(\tau)$ are real. To this end,
suppose that $S(\tau)=0$, $\tau\in\mathbb{C}$. Rearranging equation
(\ref{eqn:Rtau}) yields 
\begin{equation}
\prod_{k=1}^{n}\frac{\tau_{k}-\tau e^{i\theta}}{\tau_{k}-\tau e^{-i\theta}}=e^{2ir\theta}.\label{eqn:Rtauprodzero}
\end{equation}
If $\tau\notin\mathbb{R}$, then $\tau_{1},\ldots,\tau_{n}$ all lie
in the open half plane containing the positive real axis, with boundary
the perpendicular bisector of the line segment joining the points
$\tau e^{i\theta}$ and $\tau e^{-i\theta}$. Consequently, 
\[
\prod_{k=1}^{n}\left|\frac{\tau_{k}-\tau e^{i\theta}}{\tau_{k}-\tau e^{-i\theta}}\right|\ne1,
\]
a contradiction. \\
 Let $\gamma_{1},\ldots,\gamma_{n-1}$ denote the zeros of $P'(\tau)$
(see Figure \ref{fig:zerosR} for an illustration). If $\tau\in\mathbb{R}$
is such that $S(\tau)=S'(\tau)=0$, then along with equation (\ref{eqn:Rtau}),
we also have the following equation: 
\begin{equation}
\prod_{k=1}^{n-1}\frac{\gamma_{k}-\tau e^{i\theta}}{\gamma_{k}-\tau e^{-i\theta}}=e^{2i(r-1)\theta}.\label{eqn:Rprimetauprodzero}
\end{equation}
We define the angles $0<\theta_{k},\eta_{k}<\pi$ implicitly by 
\begin{eqnarray*}
\frac{\tau_{k}-\tau e^{i\theta}}{\tau_{k}-\tau e^{-i\theta}} & = & e^{2i\theta_{k}}\qquad(k=0,1,2,\ldots,n)\\
\frac{\gamma_{k}-\tau e^{i\theta}}{\gamma_{k}-\tau e^{-i\theta}} & = & e^{2i\eta_{k}}\qquad(k=0,1,2,\ldots,n-1).
\end{eqnarray*}

\begin{figure}[H]
\includegraphics[scale=0.4]{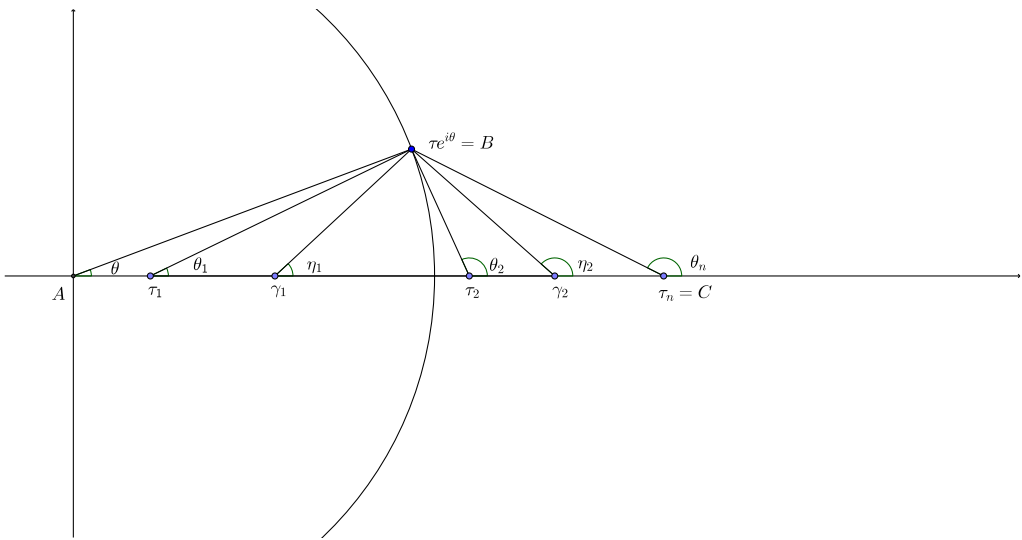}

\caption{\label{fig:zerosR}Zeros of $P(\tau)$ and $P'(\tau)$}
\end{figure}

With these definitions, equations (\ref{eqn:Rtauprodzero}) and (\ref{eqn:Rprimetauprodzero})
imply that 
\[
\sum_{k=1}^{n}\theta_{k}\equiv\sum_{k=1}^{n-1}\eta_{k}+\theta\mbox{ (mod }\pi).
\]
On the other hand, the interlacing of the zeros of $P$ and $P'$,
together with the fact that $\angle ABC<\pi$ imply that $\sum_{k=1}^{n-1}\eta_{k}+\theta<\sum_{k=1}^{n}\theta_{k}<\sum_{k=1}^{n-1}\eta_{k}+\theta+\pi$,
a contradiction. We conclude that the zeros of $S(\tau)$ are simple,
as well as real. 
\end{proof}
\begin{rem}
\label{rem:anglesumdecrease} We emphasize the following consequence
of Lemma \ref{lem:zerosRdisct}. Given $\theta\in(0,\pi/r)$, the
angle sum $\sum_{k=1}^{n}\theta_{k}$ is decreasing as $\tau$ increases.
Consequently, given the $n$-tuple $(\theta_{1},\theta_{2},\ldots,\theta_{n})$
corresponding to $\theta$ via equation (\ref{eq:theta_kdef}), the
equality in (\ref{eq:sumthetak}) is satisfied for a \foreignlanguage{english}{\textit{unique}
value $0\leq l<n$. }
\end{rem}
In our motivational interlude we indicated that an angle $\theta\in(0,\pi/r)$
would generate an $n$-tuple of angles $(\theta_{1},\theta_{2},\ldots,\theta_{n})$
with certain desirable properties. In order to make these properties
explicit, we need to make use of the complex version of the Implicit
Function Theorem. 
\begin{thm}[Theorem 2.1.2,\, p.24 \cite{hormander}]
\label{IFT} Let $f_{j}(w,z),$ $j=1,\ldots,m$, be analytic functions
of $(w,z)=(w_{1},\ldots,w_{m},z_{1},\ldots,z_{n})$ in a neighborhood
of a point $(w^{*},z^{*})$ in $\mathbb{C}^{m}\times\mathbb{C}^{n}$,
and assume that $f_{j}(w^{*},z^{*})=0$, $j=1,\ldots,m$, and that
\[
\det\left(\frac{\partial f_{j}}{\partial w_{k}}\right)_{j,k=1}^{m}\neq0\qquad\textrm{at}\quad(w^{*},z^{*}).
\]
Then the equations $f_{j}(w,z)=0$, $j=1,\ldots,m$ have a uniquely
determined analytic solution $w(z)$ in a neighborhood of $z^{*}$,
such that $w(z^{*})=w^{*}$. 
\end{thm}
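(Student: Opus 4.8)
This is the classical holomorphic Implicit Function Theorem, so the plan is not to discover an argument but to sketch the standard one: deduce it from the real (smooth) Implicit Function Theorem, and then upgrade the resulting smooth solution to an analytic one by verifying the Cauchy--Riemann equations. First I would pass to real coordinates, writing $w_{k}=u_{k}+iv_{k}$, $z_{l}=x_{l}+iy_{l}$, and $f_{j}=g_{j}+ih_{j}$, so that the $2m$ real functions $g_{j},h_{j}$ of the $2m+2n$ real variables $(u,v,x,y)$ are smooth in a neighborhood of the point corresponding to $(w^{*},z^{*})$ and vanish there.

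The algebraic key is that the hypothesis $\det(\partial f_{j}/\partial w_{k})\neq0$ forces the real Jacobian of $(g_{j},h_{j})$ with respect to $(u_{k},v_{k})$ to be nonzero. Indeed, since each $f_{j}$ is holomorphic in $w$, the real differential of the map $w\mapsto(g_{j},h_{j})$ is $\mathbb{C}$-linear, and a $\mathbb{C}$-linear endomorphism of $\mathbb{C}^{m}$, viewed as an $\mathbb{R}$-linear endomorphism of $\mathbb{R}^{2m}$, has real determinant equal to the squared modulus of its complex determinant. Hence the real Jacobian equals $|\det(\partial f_{j}/\partial w_{k})|^{2}>0$. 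With this in hand the real Implicit Function Theorem applies and produces smooth functions $u(x,y),v(x,y)$, unique among continuous solutions in a neighborhood, with the prescribed value at $(x^{*},y^{*})$ and $g_{j}\equiv h_{j}\equiv0$; equivalently, a smooth map $w(z)$ with $w(z^{*})=w^{*}$ and $f_{j}(w(z),z)\equiv0$.

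It then remains to show that $w(z)$ is holomorphic, i.e.\ that $\partial w_{k}/\partial\bar{z}_{l}=0$. Differentiating the identity $f_{j}(w(z),z)=0$ with the Wirtinger operator $\partial/\partial\bar{z}_{l}$ and applying the chain rule yields
\[
0=\sum_{k=1}^{m}\frac{\partial f_{j}}{\partial w_{k}}\frac{\partial w_{k}}{\partial\bar{z}_{l}}+\sum_{k=1}^{m}\frac{\partial f_{j}}{\partial\bar{w}_{k}}\frac{\partial\bar{w}_{k}}{\partial\bar{z}_{l}}+\frac{\partial f_{j}}{\partial\bar{z}_{l}}.
\]
Because the $f_{j}$ are holomorphic in all variables, the terms $\partial f_{j}/\partial\bar{w}_{k}$ and $\partial f_{j}/\partial\bar{z}_{l}$ vanish, leaving $\sum_{k}(\partial f_{j}/\partial w_{k})(\partial w_{k}/\partial\bar{z}_{l})=0$ for every $j$. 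The matrix $(\partial f_{j}/\partial w_{k})$ is invertible at $(w^{*},z^{*})$, hence by continuity throughout a neighborhood, so $\partial w_{k}/\partial\bar{z}_{l}=0$ for all $k,l$, and $w$ is analytic. Uniqueness is inherited directly from the real theorem.

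The step I expect to be the most delicate is the holomorphy verification, since it rests on careful bookkeeping with the $\partial/\partial\bar{z}$ operators together with the invertibility of the Jacobian matrix; everything else is a transcription of the real statement. An alternative that avoids invoking the smooth Implicit Function Theorem altogether is Cauchy's method of majorants: one constructs the formal power series of $w(z)$ coefficient by coefficient and then dominates it by the explicit solution of a scalar majorizing equation to obtain convergence. That route is self-contained but computationally heavier, and I would reserve it as a fallback, preferring the real-plus-$\bar{\partial}$ argument for brevity.
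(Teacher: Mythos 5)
The paper does not prove this statement at all: it is imported verbatim as Theorem 2.1.2 of H\"ormander's book (the citation \cite{hormander}) and used as a black box in the proof of Lemma \ref{lem:thetatuple}. Your argument is correct and is essentially the proof given in the cited source itself --- reduce to the real implicit function theorem via the fact that the real Jacobian equals $|\det(\partial f_{j}/\partial w_{k})|^{2}\neq0$, then apply the $\partial/\partial\bar{z}_{l}$ chain rule to the identity $f_{j}(w(z),z)=0$ and invoke invertibility of $(\partial f_{j}/\partial w_{k})$ near $(w^{*},z^{*})$ to get $\partial w_{k}/\partial\bar{z}_{l}=0$ --- so there is nothing to correct.
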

We are now ready to state and prove the following lemma. 
\begin{lem}
\label{lem:thetatuple} Let $P(t)$ be a polynomial of degree $n$
whose zeros, $\tau_{1}\le\tau_{2}\le\cdots\le\tau_{n}$, are positive
and let $0\le l<n$ be a fixed integer. There is a unique $n$-tuple
of functions $\theta_{1}=\theta_{1}(\theta),\ldots,\theta_{n}=\theta_{n}(\theta)$
analytic on a neighborhood $\mathcal{U}$ of $(0,\pi/r)$ such that
for all $\theta\in(0,\pi/r)$ the following hold: 
\begin{itemize}
\item[(i)] $\theta_{k}(\theta)>\theta$, $1\le k\le n$, 
\item[(ii)] $\sum_{k=1}^{n}\theta_{k}(\theta)=r\theta+l\pi$, and 
\item[(iii)] 
\begin{equation}
\tau_{1}\frac{\sin\theta_{1}}{\sin(\theta_{1}-\theta)}=\tau_{2}\frac{\sin\theta_{2}}{\sin(\theta_{2}-\theta)}=\cdots=\tau_{n}\frac{\sin\theta_{n}}{\sin(\theta_{n}-\theta)}=:\tau(\theta).\label{eq:taudef}
\end{equation}
\end{itemize}
\end{lem}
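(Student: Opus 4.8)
The plan is to recast conditions (i)--(iii) as a system of $n$ analytic equations in the $n$ unknowns $\theta_1,\dots,\theta_n$, with $\theta$ as parameter, and to produce the tuple in two stages: a monotonicity argument that yields existence and uniqueness of a \emph{real} solution for each $\theta\in(0,\pi/r)$, followed by the complex Implicit Function Theorem (Theorem \ref{IFT}) to upgrade this to an analytic family on a complex neighborhood $\mathcal{U}$ of $(0,\pi/r)$. For fixed real $\theta$, I first note that condition (iii) decouples. Writing $g_k(\psi)=\tau_k\sin\psi/\sin(\psi-\theta)$, a one-line computation gives $g_k'(\psi)=-\tau_k\sin\theta/\sin^2(\psi-\theta)<0$ on $(\theta,\pi)$, with $g_k(\psi)\to+\infty$ as $\psi\to\theta^+$ and $g_k(\pi)=0$. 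Thus each $g_k$ is a strictly decreasing bijection $(\theta,\pi)\to(0,\infty)$, so for every target $\tau>0$ there is a unique $\theta_k=\theta_k(\tau)\in(\theta,\pi)$ with $g_k(\theta_k)=\tau$. This secures (i) and realizes (iii) with common value $\tau(\theta)$, as in (\ref{eq:taudef}).

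With the $\theta_k(\tau)$ so defined, condition (ii) reduces to the single scalar equation $\Sigma(\tau):=\sum_{k=1}^{n}\theta_k(\tau)=r\theta+l\pi$. Since each $\theta_k(\tau)$ is strictly decreasing in $\tau$, so is $\Sigma$, and moreover $\Sigma(\tau)\to n\pi$ as $\tau\to0^{+}$ and $\Sigma(\tau)\to n\theta$ as $\tau\to+\infty$; this recovers and sharpens the monotonicity recorded in Remark \ref{rem:anglesumdecrease}. It remains to check that the target $r\theta+l\pi$ lies in the open interval $(n\theta,n\pi)$. The upper bound $r\theta+l\pi<n\pi$ holds throughout, since $r\theta<\pi$ and $l\le n-1$; the lower bound $(n-r)\theta<l\pi$ is the binding one, and verifying it is precisely where the hypotheses on $\theta$ and the admissible range of $l$ (relative to $n$ and $r$) enter. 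Granting this, strict monotonicity and the Intermediate Value Theorem produce a unique $\tau(\theta)>0$ with $\Sigma(\tau(\theta))=r\theta+l\pi$, hence a unique real tuple $(\theta_1(\theta),\dots,\theta_n(\theta))$ satisfying (i)--(iii).

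To obtain analyticity I would apply Theorem \ref{IFT} to $F=(f_1,\dots,f_n)$, where $f_k=g_k(\theta_k)-g_{k+1}(\theta_{k+1})$ for $1\le k\le n-1$ and $f_n=\sum_{j=1}^{n}\theta_j-r\theta-l\pi$, using as base point the real solution found above at a given $\theta^{*}\in(0,\pi/r)$. The crux is the non-vanishing of the Jacobian $\det(\partial f_j/\partial\theta_k)$ there. Writing $d_k=g_k'(\theta_k)<0$, one has $\partial f_k/\partial\theta_k=d_k$ and $\partial f_k/\partial\theta_{k+1}=-d_{k+1}$ for $k<n$, with the last row equal to $(1,\dots,1)$. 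Any kernel vector $(\delta\theta_1,\dots,\delta\theta_n)$ must satisfy $d_k\,\delta\theta_k=d_{k+1}\,\delta\theta_{k+1}$, so $\delta\theta_k=c/d_k$ for a common constant $c$, and the last equation becomes $c\sum_{k=1}^{n}d_k^{-1}=0$. Since all $d_k$ are negative, $\sum_k d_k^{-1}\ne0$, forcing $c=0$; hence the kernel is trivial and the Jacobian is nonzero. Theorem \ref{IFT} then gives a unique analytic solution near each $\theta^{*}$, and by uniqueness at real points these local solutions glue into a single analytic tuple on a neighborhood $\mathcal{U}$ of $(0,\pi/r)$ whose restriction to the interval is the family just constructed; global uniqueness follows from the pointwise uniqueness together with the identity theorem for analytic functions.

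I expect the main obstacle to be certifying the sign of the Jacobian: everything collapses once one recognizes the kernel relation $c\sum_k d_k^{-1}=0$, but choosing the cleared form of (iii) so that the derivatives $d_k$ enter with a common sign is the step that makes the argument go through. A secondary point demanding care is the verification that $r\theta+l\pi\in(n\theta,n\pi)$ across $(0,\pi/r)$, since it is exactly this range condition that keeps each $\theta_k(\theta)$ strictly inside $(\theta,\pi)$ and thereby preserves (i)--(iii) under analytic continuation.
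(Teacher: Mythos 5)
Your overall strategy coincides with the paper's: produce the real tuple pointwise by a monotonicity argument, upgrade it to an analytic family via the complex Implicit Function Theorem (Theorem \ref{IFT}) at each $\theta^{*}\in(0,\pi/r)$, and glue the local solutions by analytic continuation. Your Jacobian argument is equivalent to the paper's: the paper keeps $\tau$ as an $(n+1)$-st unknown and expands the $(n+1)\times(n+1)$ determinant to $\sum_{k}\prod_{j\ne k}c_{j}\neq0$ with all $c_{k}<0$, while you eliminate $\tau$ by chaining the equalities in (iii) and run a kernel argument; both reduce to the non-vanishing of $\bigl(\prod_{k}d_{k}\bigr)\sum_{k}d_{k}^{-1}$ with all $d_{k}$ of one sign. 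The genuine difference is the existence step: the paper cites Lemma \ref{lem:zerosRdisct} (the real, distinct zeros of $S(\tau)$), whereas you give a self-contained argument with the decreasing bijections $g_{k}:(\theta,\pi)\to(0,\infty)$ and the scalar equation $\Sigma(\tau)=r\theta+l\pi$. Your version is more elementary and has the virtue of making the exact solvability condition visible, namely $r\theta+l\pi\in(n\theta,n\pi)$.

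However, you do not close that step: you ``grant'' the lower bound $(n-r)\theta<l\pi$ rather than prove it, and in fact it cannot be proved for every $0\le l<n$. For $l=0$ and $r\le n$ it fails for every $\theta\in(0,\pi/r)$; indeed conditions (i) and (ii) are then outright incompatible, since (i) forces $\sum_{k}\theta_{k}>n\theta\ge r\theta$. So the lemma as stated only admits a solution for $l$ large enough (roughly $l\ge n/r-1$), and your proof is complete exactly in that range; in particular for $l=n-1$, the case the paper uses throughout, the bound is immediate: when $n>r$, $(n-r)\theta<(n-r)\pi/r\le(n-1)\pi$ since $n\le nr$, and when $n\le r$ the left side is nonpositive. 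You should state this restriction and verify the bound there; as written, the existence half of your argument is conditional. It is worth noting that the paper's own proof has the same lacuna, hidden in its appeal to Lemma \ref{lem:zerosRdisct}: the real distinct zeros of $S(\tau)$ include negative ones, and only the positive zeros produce tuples satisfying (i), so not every class $l$ is realized by some $\tau>0$. Your decoupled construction exposes this defect rather than creates it, which is a point in its favor.
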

\begin{proof}
Let $\theta^{*}\in(0,\pi/r)$ be given. By Lemma \ref{lem:zerosRdisct},
there is a unique solution $\tau^{*}$ to equation (\ref{eq:proddistances})
with the corresponding $n$-tuple $(\theta_{1}^{*},\ldots,\theta_{n}^{*})$
satisfying $\theta^{*}<\theta_{1}^{*}\leq\cdots\leq\theta_{n}^{*}$,
and $\sum_{k=1}^{n}\theta_{k}^{*}=r\theta^{*}+l\pi$. 
\begin{figure}[H]
\includegraphics[scale=0.4]{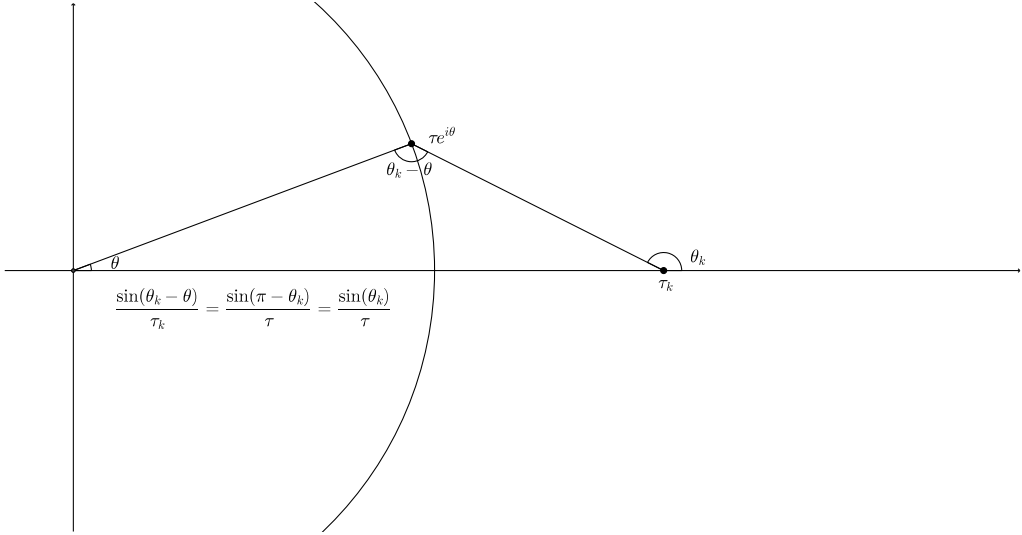} \caption{\label{fig:lawofsines}A representative triangle}
\end{figure}

The law of sines implies (see Figure \ref{fig:lawofsines}) that for
all $1\leq k\leq n$, 
\[
\tau^{*}=\tau_{k}\frac{\sin\theta_{k}^{*}}{\sin(\theta_{k}^{*}-\theta^{*})}.
\]
For $k=1,\ldots,n$ define $f_{k}:\mathbb{C}^{n+1}\times\mathbb{C}\to\mathbb{C}$
by 
\[
f_{k}(\theta_{1},\theta_{2}\ldots,\theta_{n},\tau,\theta)=\tau_{k}\frac{\sin\theta_{k}}{\sin(\theta_{k}-\theta)}-\tau
\]
and 
\[
f_{n+1}(\theta_{1},\theta_{2}\ldots,\theta_{n},\tau,\theta)=\sum_{k=1}^{n}\theta_{k}-(r\theta+l\pi).
\]
Note that 
\[
f_{k}(\theta_{1}^{*},\ldots,\theta_{n}^{*},\tau^{*},\theta^{*})=0,\qquad1\le k\le n+1,
\]
and that there exists a neighborhood $\mathcal{W}$ of $(\theta_{1}^{*},\ldots,\theta_{n}^{*},\tau^{*},\theta^{*})\in\mathbb{C}^{n+1}\times\mathbb{C}$
where each $f_{k}(\theta_{1},\ldots,\theta_{n},\tau,\theta)$ is analytic.
We calculate 
\[
\frac{\partial f_{k}}{\partial\theta_{k}}\Bigg|_{(\theta_{1}^{*},\theta_{2}^{*},\ldots,\theta_{n}^{*},\tau^{*},\theta^{*})}=\frac{-\tau_{k}\sin\theta^{*}}{\sin^{2}(\theta_{k}^{*}-\theta^{*})}=:c_{k}<0,\qquad1\leq k\leq n,
\]
and hence the Jacobian matrix at $(\theta_{1}^{*},\theta_{2}^{*},\ldots,\theta_{n}^{*},\tau^{*},\theta^{*})$
is 
\[
\left[\begin{array}{ccccc}
c_{1} & 0 & \cdots & 0 & -1\\
0 & c_{2} & \cdots & 0 & -1\\
\vdots & \vdots & \ddots & \vdots & \vdots\\
0 & 0 & \cdots & c_{n} & -1\\
1 & 1 & \cdots & 1 & 0
\end{array}\right].
\]
By expanding along the first row, we find the determinant of this
matrix to be 
\[
c_{2}\cdots c_{n}+c_{1}c_{3}\cdots c_{n}+c_{1}c_{2}c_{4}\cdots c_{n}+\cdots+c_{1}c_{2}\cdots c_{n-1}\ne0,
\]
since each summand carries the same sign. We may therefore invoke
Theorem \ref{IFT} to conclude that there is a unique analytic function
$w(\theta)=(\theta_{1}(\theta),\theta_{2}(\theta),\ldots,\theta_{n}(\theta))$
on a neighborhood $\mathcal{U}_{\theta^{*}}$ of $\theta^{*}$, such
that 
\[
w(\theta^{*})=(\theta_{1}(\theta^{*}),\theta_{2}(\theta^{*}),\ldots,\theta_{n}(\theta^{*}))=(\theta_{1}^{*},\theta_{2}^{*},\ldots,\theta_{n}^{*}).
\]
By analytic continuation we conclude that there is a neighborhood
of $\mathcal{U}$ of $(0,\pi/r)$, and an analytic function $w$ on
$\mathcal{U}$ such that $w(\theta^{*})=(\theta_{1}^{*},\ldots,\theta_{n}^{*})$
for any $\theta^{*}\in(0,\pi/r)$. Naming the components of $w$ as
$\theta_{k}(\theta)$, $1\leq k\leq n$, finishes the proof. 
\end{proof}
\begin{lem}
\label{lem:zfunctheta} Let $0\le l<n$, and set $t_{0}=\tau e^{-i\theta}$,
where $\tau=\tau(\theta)$ is as in Lemma \ref{lem:thetatuple} (iii).
The real-valued function 
\[
z(\theta)=-\frac{P(t_{0})}{t_{0}^{r}}
\]
is strictly monotone on $(0,\pi/r)$. 
\end{lem}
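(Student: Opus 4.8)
The plan is to show that the derivative $z'(\theta)$ does not vanish anywhere on $(0,\pi/r)$. Since $z$ is real-analytic on this interval — being the composition of the analytic functions $\theta_{k}(\theta),\tau(\theta)$ supplied by Lemma \ref{lem:thetatuple} with the map $t\mapsto -P(t)/t^{r}$, which is regular away from $t=0$ — a continuous, nowhere-vanishing derivative on the connected interval $(0,\pi/r)$ must have constant sign, and hence $z$ is strictly monotone. So the entire task reduces to verifying $z'(\theta)\neq0$.

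To carry this out I would write $z(\theta)=g(t_{0}(\theta))$, where $g(t)=-P(t)/t^{r}$ and $t_{0}(\theta)=\tau(\theta)e^{-i\theta}$. A direct computation gives
\[
g'(t)=\frac{d}{dt}\!\left(-\frac{P(t)}{t^{r}}\right)=\frac{rt^{r-1}P(t)-t^{r}P'(t)}{t^{2r}}=\frac{R(t)}{t^{2r}},
\]
where $R(t)$ is precisely the polynomial appearing in Lemma \ref{lem:existenceinterval}. Differentiating $t_{0}(\theta)=\tau(\theta)e^{-i\theta}$ yields $t_{0}'(\theta)=\bigl(\tau'(\theta)-i\tau(\theta)\bigr)e^{-i\theta}$, so the chain rule produces the factorization
\[
z'(\theta)=g'(t_{0})\,t_{0}'(\theta)=\frac{R(t_{0})}{t_{0}^{2r}}\,\bigl(\tau'(\theta)-i\tau(\theta)\bigr)e^{-i\theta}.
\]

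It then remains to confirm that each of the three factors is nonzero on $(0,\pi/r)$. The factor $t_{0}^{-2r}$ is clearly nonzero. For $\theta\in(0,\pi/r)$ the point $t_{0}=\tau(\theta)e^{-i\theta}$ is non-real (since $\tau(\theta)>0$ and $0<\theta<\pi$), while Lemma \ref{lem:existenceinterval} guarantees that every zero of $R$ is real; hence $R(t_{0})\neq0$. Finally, because $\tau(\theta)$ is real-valued on the real interval $(0,\pi/r)$, its derivative $\tau'(\theta)$ is also real, so $\tau'(\theta)-i\tau(\theta)$ has modulus $\sqrt{\tau'(\theta)^{2}+\tau(\theta)^{2}}\geq\tau(\theta)>0$ and thus never vanishes. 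Consequently $z'(\theta)\neq0$ throughout $(0,\pi/r)$, which is exactly what the monotonicity argument requires.

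The main obstacle is the middle step: ruling out $R(t_{0})=0$. This is where the reality of the zeros of $R$, established in Lemma \ref{lem:existenceinterval}, does the essential work, converting the real-variable claim of monotonicity into the statement that $R$ does not vanish at the genuinely complex point $t_{0}$. The only other point requiring care is confirming that $\tau$, and hence $\tau'$, are real on $(0,\pi/r)$ — so that the imaginary part of $\tau'-i\tau$ is genuinely $-\tau\neq0$ — which follows directly from the construction of $\tau$ in Lemma \ref{lem:thetatuple}.
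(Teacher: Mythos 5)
Your proposal is correct, but it takes a genuinely different route from the paper's. The paper computes $dz/d\theta$ by logarithmic differentiation of $z=-P(t_{0})/t_{0}^{r}$ together with the conjugate-root relation $\prod_{k}(\tau_{k}-t_{0}e^{2i\theta})=e^{2ir\theta}\prod_{k}(\tau_{k}-t_{0})$, and combines the two to obtain the identity
\[
\frac{dz}{z\,d\theta}\sum_{k=1}^{n}\frac{\tau_{k}\tau(\theta)\sin\theta}{|\tau_{k}-\tau e^{-i\theta}|^{2}}=\left|\sum_{k=1}^{n}\frac{\tau e^{-i\theta}}{\tau_{k}-\tau e^{-i\theta}}+r\right|^{2},
\]
after which strict positivity of the right-hand side follows because each term $\tau e^{-i\theta}/(\tau_{k}-\tau e^{-i\theta})$ has argument in $(-\pi,0)$, so the sum has strictly negative imaginary part and cannot cancel the real number $r$. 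You instead factor $z'(\theta)$ by the chain rule as $g'(t_{0})\,t_{0}'(\theta)$ with $g(t)=-P(t)/t^{r}$, and rule out vanishing of each factor separately: $g'(t_{0})=R(t_{0})/t_{0}^{2r}\neq0$ because $t_{0}$ is non-real while Lemma \ref{lem:existenceinterval} makes $R$ hyperbolic, and $t_{0}'=(\tau'-i\tau)e^{-i\theta}\neq0$ because $\tau>0$ is real. This is shorter and conceptually cleaner, and it reuses Lemma \ref{lem:existenceinterval}, which the paper otherwise exploits only to identify $t_{a}$ and $t_{b}$; all the lemmas you cite precede this one, so there is no circularity. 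What your route gives up is directional information: the paper's identity shows that $z'$ and $z$ have the \emph{same} sign, which is exactly what is invoked later (in Lemma \ref{lem:zonto} and in the proof of Proposition \ref{prop:zerodenom}) to conclude that $z(\theta;l)$ is monotone \emph{increasing} whenever $n-l-1$ is even, i.e.\ whenever $z>0$. Your factorization establishes only that $z'$ never vanishes, so the direction of monotonicity would require a separate evaluation (for instance, determining the sign of $R(t_{0})(\tau'-i\tau)e^{-i\theta}/t_{0}^{2r}$ at a single $\theta$). For the lemma as stated this is immaterial, but if your proof were substituted into the paper, the downstream claims of increase would need that supplement.
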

\begin{proof}
Note that since $t_{0}\neq0$, and $\theta-\theta_{k}\neq0$ for any
$1\leq k\leq n$, the function $z(\theta)$ is differentiable on $(0,\pi/r)$.
We take the logarithmic derivatives of both sides of $z=-P(t_{0})/t_{0}^{r}$
to obtain 
\begin{equation}
\frac{dz}{z}=-\sum_{k=1}^{n}\frac{dt_{0}}{\tau_{k}-t_{0}}-r\frac{dt_{0}}{t_{0}}.\label{eq:dzt}
\end{equation}
Similarly, the logarithmic derivatives of both sides of the equation
\[
\prod_{k=1}^{n}\tau_{k}-t_{0}e^{2i\theta}=e^{2ir\theta}\prod_{k=1}^{n}\tau_{k}-t_{0}
\]
give 
\[
\sum_{k=1}^{n}\frac{-e^{2i\theta}dt_{0}-2it_{0}e^{2i\theta}d\theta}{\tau_{k}-t_{0}e^{2i\theta}}=2ird\theta+\sum_{k=1}^{n}\frac{-dt_{0}}{\tau_{k}-t_{0}},
\]
or equivalently 
\begin{equation}
\sum_{k=1}^{n}\frac{-\sin\theta\tau_{k}e^{i\theta}}{(\tau_{k}-t_{0})(\tau_{k}-t_{0}e^{2i\theta})}dt_{0}=rd\theta+\sum_{k=1}^{n}\frac{t_{0}e^{2i\theta}}{\tau_{k}-t_{0}e^{2i\theta}}d\theta.\label{eq:dthetat}
\end{equation}
Thus (\ref{eq:dzt}) and (\ref{eq:dthetat}) yield 
\[
\frac{dz}{zd\theta}\sum_{k=1}^{n}\frac{\sin\theta\tau_{k}e^{i\theta}}{(\tau_{k}-t_{0})(\tau_{k}-t_{0}e^{2i\theta})}=\left(\sum_{k=1}^{n}\frac{1}{\tau_{k}-t_{0}}+\frac{r}{t_{0}}\right)\left(\sum_{k=1}^{n}\frac{t_{0}e^{2i\theta}}{\tau_{k}-t_{0}e^{2i\theta}}+r\right).
\]
Multiplying both sides by $t_{0}$ results in 
\begin{equation}
\frac{dz}{zd\theta}\sum_{k=1}^{n}\frac{\tau_{k}\tau(\theta)\sin\theta}{|\tau_{k}-\tau e^{-i\theta}|^{2}}=\left|\sum_{k=1}^{n}\frac{\tau e^{-i\theta}}{\tau_{k}-\tau e^{-i\theta}}+r\right|^{2},
\end{equation}
from which monotonicity follows\footnote{whether $z$ is increasing or decreasing depends on the parity of
$n-l-1$, as in (\ref{eq:ztheta})}. To establish strict monotonicity we note that ${\displaystyle {-\pi<\text{Arg}\frac{\tau e^{-i\theta}}{\tau_{k}-\tau e^{-i\theta}}<0}}$
for all $k$ and $\theta\in(0,\pi/r)$. Consequently, 
\[
-\pi<\text{Arg}\left(\sum_{k=1}^{n}\frac{\tau e^{-i\theta}}{\tau_{k}-\tau e^{-i\theta}}\right)<0,\qquad(\theta\in(0,\pi/r))
\]
and we conclude that 
\[
\sum_{k=1}^{n}\frac{\tau e^{-i\theta}}{\tau_{k}-\tau e^{-i\theta}}+r\neq0.\qquad(\theta\in(0,\pi/r))
\]
The proof is complete. 
\end{proof}
\begin{rem}
Note that Lemmas \ref{lem:thetatuple} and \ref{lem:zfunctheta} hold
for any fixed $0\le l<n$. In particular, for each $0\le l<n$, we
have a tuple of $n$ continuous real-valued functions $\theta_{k}(\theta;l)$,
$1\le k\le n$, on $\theta\in(0,\pi/r)$ where the function $z(\theta;l)$
given in \eqref{eq:ztheta} is monotone. For simplicity in notation,
from this point of the paper, whenever $l=n-1$, we will suppress
the parameter $l$ and simply write $\theta_{k}(\theta):=\theta_{k}(\theta;n-1)$,
$1\le k\le n$, $\tau(\theta):=\tau(\theta;n-1)$, and $z(\theta):=z(\theta;n-1)$.
If $l\ne n-1$, we will write these functions in their full notations
$\theta_{k}(\theta;l)$, $\tau(\theta;l)$, and $z(\theta;l)$.

An argument completely analogous to the proof of Lemma \ref{lem:thetatuple}
gives the next auxiliary result, which we will make use of when establishing
the last important property of the function $z(\theta)$. 
\end{rem}
\begin{lem}
\label{lem:Athetapositive} If $l=n-1$, the function $\tau(\theta)$
defined in (\ref{eq:taudef}) is strictly monotone decreasing on $(0,\pi/r)$. 
\end{lem}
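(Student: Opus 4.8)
The plan is to obtain $d\tau/d\theta$ by implicit differentiation of the defining relations (\ref{eq:taudef}), reusing the Jacobian bookkeeping from the proof of Lemma \ref{lem:thetatuple}, and then to isolate its sign. Differentiating each identity $\tau=\tau_k\sin\theta_k/\sin(\theta_k-\theta)$ in $\theta$ and using $\sum_k\theta_k'=r$ (from (ii)), the same elimination that produced the Jacobian determinant in Lemma \ref{lem:thetatuple} yields
\[
\tau'(\theta)=\frac{\sum_{k=1}^n\sin\theta_k\cos(\theta_k-\theta)-r\sin\theta}{\sum_{k=1}^n\sin^2(\theta_k-\theta)/\tau_k}.
\]
The denominator is positive, so $\tau$ is decreasing precisely when $N(\theta):=\sum_k\sin\theta_k\cos(\theta_k-\theta)-r\sin\theta<0$. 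Equivalently, writing $\Sigma(\tau,\theta)=\sum_k\theta_k$, Remark \ref{rem:anglesumdecrease} gives $\partial_\tau\Sigma<0$, so the implicit function theorem makes the sign of $d\tau/d\theta$ equal to that of $\partial_\theta\Sigma-r=N/\sin\theta$. Thus everything reduces to the single inequality $N(\theta)<0$ on $(0,\pi/r)$.

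I would then recast $N<0$ as a clean trigonometric inequality. A product-to-sum identity gives $N=\tfrac12\sum_k\sin(2\theta_k-\theta)+(\tfrac n2-r)\sin\theta$; substituting $\beta_k:=\pi-\theta_k$, condition (i) becomes $\beta_k\in(0,\pi-\theta)$, condition (ii) becomes $\sum_k\beta_k=\pi-r\theta$, and $N<0$ is equivalent to
\[
\sum_{k=1}^n\sin(2\beta_k+\theta)>(n-2r)\sin\theta,\qquad \sum_{k=1}^n\beta_k=\pi-r\theta.
\]
This is the point at which the hypothesis $l=n-1$ enters in an essential way: it is exactly the choice for which the angle sum $\sum_k\beta_k=\pi-r\theta$ is smaller than $\pi$, and it is this smallness that makes the inequality true; for other values of $l$ the constraint $\sum_k\beta_k=(n-l)\pi-r\theta$ is larger and $\tau$ need not decrease.

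Finally I would prove the displayed inequality by minimizing its left-hand side over the compact simplex $\{\beta_k\in[0,\pi-\theta]:\sum_k\beta_k=\pi-r\theta\}$ and showing the minimum equals $(n-2r)\sin\theta$ and is attained only on the boundary. An induction on $n$ handles the boundary: on a face $\beta_j=0$ the statement collapses verbatim to the $(n-1)$-variable version (the base case $n=1$ being the elementary estimate $\sin((2r-1)\theta)<(2r-1)\sin\theta$ on $(0,\pi/r)$), while the corner $\beta_j=\pi-\theta$ occurs only when $r=1$ and forces the remaining $\beta_k=0$, giving the boundary value $(n-2r)\sin\theta$ exactly. Since the $\theta_k$ produced by Lemma \ref{lem:thetatuple} satisfy $\theta<\theta_k<\pi$, the relevant point $(\beta_1,\dots,\beta_n)$ lies in the open simplex, so the inequality is strict and $\tau'(\theta)<0$. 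I expect the main obstacle to be this last step: precisely, verifying that no interior critical point of $\sum_k\sin(2\beta_k+\theta)$ realizes a minimum at or below $(n-2r)\sin\theta$, so that the minimum genuinely sits on the boundary, and confirming that the sole equality case is the excluded degenerate pair $(n,r)=(2,1)$, where $N\equiv0$. The differentiation and the two reductions are routine.
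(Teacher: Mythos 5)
Your reduction is correct, and it is in substance the paper's own: your formula for $\tau'(\theta)$ is exactly equation (\ref{eq:dtaudtheta}) (the paper derives it from (\ref{eq:dthetat}) rather than by differentiating the law-of-sines relations, but the result is identical), your $N(\theta)$ equals $-A(\theta)\sin\theta$, and your substitution $\beta_{k}=\pi-\theta_{k}$, with the constraint $\sum_{k}\beta_{k}=\pi-r\theta$ coming precisely from $l=n-1$, is the paper's $\eta$-substitution. So you have correctly reduced the lemma to the inequality $\sum_{k=1}^{n}\sin(2\beta_{k}+\theta)>(n-2r)\sin\theta$ on the open simplex, and correctly identified $(n,r)=(2,1)$ as the degenerate case where it fails (there $N\equiv0$ and $\tau$ is constant).

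The gap is that this inequality — which is the entire content of the lemma — is never proved. Your minimization scheme defers all of the difficulty to the assertion that no interior critical point of $\sum_{k}\sin(2\beta_{k}+\theta)$ has value at or below $(n-2r)\sin\theta$, which you yourself flag as the expected main obstacle, and this step is not a routine verification. By Lagrange multipliers, at an interior critical point all $\cos(2\beta_{k}+\theta)$ coincide, so the coordinates take at most two values $u$ and $\pi-\theta-u$, with multiplicities $p$ and $q=n-p$; the constraint then forces (for $p\neq q$) the critical value
\begin{equation*}
F_{\mathrm{crit}}=(p-q)\sin\left(\frac{2(1-q)\pi+(n-2r)\theta}{p-q}\right),
\end{equation*}
and one must show every such value exceeds $(n-2r)\sin\theta$. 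These margins can be genuinely thin: for $q=0$ and $r=1$ the comparison is $n\sin\bigl(\tfrac{2\pi+(n-2)\theta}{n}\bigr)$ against $(n-2)\sin\theta$, and as $\theta\to\pi$ both sides vanish with a difference of order only $(\pi-\theta)^{3}$ (compare the delicate asymptotics the paper needs in Lemma \ref{lem:AthetaAsymp}). So no compactness or crude bound will close this step; it is a family of trigonometric inequalities of essentially the same nature and difficulty as the original one. The paper avoids it entirely: using $l=n-1$ it observes that at most one of the angles $2\theta_{k}-\theta$ can lie in $(0,\pi)$, writes that single angle as $(2r-n)\theta+\sum_{k\ge2}(2\eta_{k}+\theta)$ (when $r\ge n/2$, and analogously when $n>2r$), and then telescopes the subadditivity estimate $\sin(\alpha+\beta)<\sin\alpha+\sin\beta$ for $\alpha,\beta\in(0,\pi)$, i.e.\ inequality (\ref{eq:sineineq}), to conclude $A(\theta)>0$. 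Until your critical-point analysis is actually carried out, your argument establishes only the (correct) reduction, not the lemma.
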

\begin{proof}
With $t_{0}=\tau e^{-i\theta}$ and $dt_{0}=e^{-i\theta}d\tau-i\tau e^{-i\theta}d\theta$,
\eqref{eq:dthetat} gives 
\begin{eqnarray*}
\sum_{k=1}^{n}\frac{-\sin\theta\tau_{k}}{(\tau_{k}-\tau e^{-i\theta})(\tau_{k}-\tau e^{i\theta})}d\tau & = & rd\theta+\sum_{k=1}^{n}\left(\frac{\tau e^{i\theta}}{\tau_{k}-\tau e^{i\theta}}-\frac{i\sin\theta\tau\tau_{k}}{(\tau_{k}-\tau e^{-i\theta})(\tau_{k}-\tau e^{i\theta})}\right)d\theta\\
 & = & rd\theta-\sum_{k=1}^{n}\frac{\tau^{2}-\tau\tau_{k}\cos\theta}{(\tau_{k}-\tau e^{i\theta})(\tau_{k}-\tau e^{-i\theta})}d\theta.
\end{eqnarray*}
Using \eqref{eq:t0form} and \eqref{eq:taukt0diff} we may rewrite
the above equation as 
\begin{equation}
-\frac{d\tau}{\sin\theta d\theta}\sum_{k=1}^{n}\frac{\sin^{2}(\theta_{k}-\theta)}{\tau_{k}}=r-\sum_{k=1}^{n}\frac{\sin\theta_{k}}{\sin\theta}\cos(\theta_{k}-\theta).\label{eq:dtaudtheta}
\end{equation}
The claim will follow as soon as we establish that the right hand
side of (\ref{eq:dtaudtheta}) 
\begin{equation}
A(\theta):=r-\sum_{k=1}^{n}\frac{\sin\theta_{k}}{\sin\theta}\cos(\theta_{k}-\theta),\label{eqn:athetadef}
\end{equation}
is positive. We first consider the case when $r\ge n/2$. In this
case, we see that 
\begin{eqnarray*}
A(\theta)\sin\theta & = & \left(r-\frac{n}{2}\right)\sin\theta-\frac{1}{2}\sum_{k=1}^{n}\sin(2\theta_{k}-\theta).
\end{eqnarray*}
If $\pi\le2\theta_{k}-\theta<2\pi$, $1\le k\le n$, then the claim
$A(\theta)\ge0$ is trivial since all the terms are nonnegative. Furthermore
$A(\theta)=0$ only when $r=n/2$ and $2\theta_{k}-\theta=\pi$, $1\le k\le n$.
The sum all terms $1\le k\le n$ in the later equality implies that
$n=2$. On the other hand, if $2\theta_{1}-\theta<\pi$ then the identity
$\sum_{k=1}^{n}(2\theta_{k}-\theta)=2(n-1)\pi+(2r-n)\theta$ implies
that $0<2\theta_{1}-\theta<\pi$ and $\pi<2\theta_{k}-\theta<2\pi$,
$\forall k\ge2$. Let $\eta_{1}=\theta_{1}$ and $\eta_{k}=\pi-\theta_{k}$,
$k\ge2$, where $\eta_{1}-\sum_{k=2}^{n}\eta_{k}=r\theta$. In terms
of these new variables, we have 
\begin{equation}
A(\theta)\sin\theta=\left(r-\frac{n}{2}\right)\sin\theta-\frac{1}{2}\sin(2\eta_{1}-\theta)+\frac{1}{2}\sum_{k=2}^{n}\sin(2\eta_{k}+\theta)\label{eq:firstsumsine}
\end{equation}
where $2\eta_{1}-\theta=(2r-n)\theta+\sum_{k=2}^{n}(2\eta_{k}+\theta)$.
Furthermore, we have $0<2\eta_{1}-\theta<\pi$ and $0<2\eta_{k}+\theta<\pi$,
$\forall k\ge2$. We note that for any two angles $0<\alpha,\beta<\pi$
the following inequality holds 
\begin{equation}
\sin(\alpha+\beta)=\sin\alpha\cos\beta+\cos\alpha\sin\beta<\sin\alpha+\sin\beta.\label{eq:sineineq}
\end{equation}
Thus 
\[
A(\theta)\sin\theta>\left(r-\frac{n}{2}\right)\sin\theta-\frac{1}{2}\sin(2r\theta-n\theta)\ge0.
\]

We next consider when $n>2r$. In this case we write $A(\theta)$
as 
\[
A(\theta)\sin\theta=-\frac{1}{2}\sum_{k=1}^{2r}\sin(2\theta_{k}-\theta)-\sum_{k=2r+1}^{n}\sin\theta_{k}\cos(\theta_{k}-\theta).
\]
For the same reason, it suffices to consider when $0<2\theta_{1}-\theta<\pi$
and $\pi<2\theta_{k}-\theta<2\pi$, $\forall k\ge2$. In this case
\[
A(\theta)\sin\theta=-\frac{1}{2}\sin(2\eta_{1}-\theta)+\frac{1}{2}\sum_{k=2}^{2r}\sin(2\eta_{k}+\theta)+\sum_{k=2r+1}^{n}\sin\eta_{k}\cos(\eta_{k}+\theta),
\]
where $2\eta_{1}-\theta=\sum_{k=2}^{2r}(2\eta_{k}+\theta)+\sum_{k=2r+1}^{n}2\eta_{k}$.
We can write $\sin(2\eta_{1}-\theta)$ as 
\begin{equation}
\sin(2\eta_{n})\cos\left(\sum_{k=2}^{2r}(2\eta_{k}+\theta)+\sum_{k=2r+1}^{n-1}2\eta_{k}\right)+\cos(2\eta_{n})\sin\left(\sum_{k=2}^{2r}(2\eta_{k}+\theta)+\sum_{k=2r+1}^{n-1}2\eta_{k}\right)\label{eq:secondsumsine}
\end{equation}
which is less than 
\[
2\sin\eta_{n}\cos(\eta_{n}+\theta)+\sin\left(\sum_{k=2}^{2r}(2\eta_{k}+\theta)+\sum_{k=2r+1}^{n-1}2\eta_{k}\right)
\]
since 
\[
\eta_{n}+\theta\le\sum_{k=2}^{2r}(2\eta_{k}+\theta)+\sum_{k=2r+1}^{n-1}2\eta_{k}<\pi.
\]
We continue this process and obtain the inequality 
\begin{eqnarray}
\sin(2\eta_{1}-\theta) & \le & 2\sum_{k=2r+1}^{n}\sin\eta_{k}\cos(\eta_{k}+\theta)+\sin\left(\sum_{k=2}^{2r}(2\eta_{k}+\theta)\right)\label{eq:thirdsumsine}\\
 & \le & 2\sum_{k=2r+1}^{n}\sin\eta_{k}\cos(\eta_{k}+\theta)+\sum_{k=2}^{2r}\sin(2\eta_{k}+\theta)\nonumber 
\end{eqnarray}
where the last inequality comes from \eqref{eq:sineineq}. We conclude
that $A(\theta)>0$ if $(n,r)\neq(2,1)$, and the proof of the lemma
is complete. 
\end{proof}
\begin{rem}
From $\sum_{k=1}^{n}\theta_{k}=r\theta+(n-1)\pi$, not all $\theta_{k}$,
$1\le k\le n$, approach $0$ when $\theta\rightarrow0$. Thus the
previous lemma and \eqref{eq:taudef} imply that $\tau(\theta)>0$
is bounded on $(0,\pi/r)$. 
\end{rem}
We wrap up our discussion of the function $z(\theta)$ with the following
result. 
\begin{lem}
\label{lem:zonto} Let $l=n-1$, let $t_{0},\tau$ and $z(\theta)$
be as in the statement of Lemma \ref{lem:zfunctheta}, and let $t_{a},t_{b},a$
and $b$ be defined as in the statement of Theorem \ref{maintheorem}.
The function $z(\theta)$ maps the interval $(0,\pi/r)$ onto the
interval $(a,b)$. 
\end{lem}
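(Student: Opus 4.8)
The plan is to leverage the strict monotonicity of $z(\theta)$ established in Lemma~\ref{lem:zfunctheta}. A continuous, strictly monotone function on the open interval $(0,\pi/r)$ is a bijection onto the open interval whose endpoints are the one-sided limits $z(0^+):=\lim_{\theta\to0^+}z(\theta)$ and $z((\pi/r)^-):=\lim_{\theta\to(\pi/r)^-}z(\theta)$, taken in $\mathbb{R}\cup\{+\infty\}$ (these exist by monotonicity). Surjectivity onto the entire open interval is then automatic from the Intermediate Value Theorem, so the lemma reduces to proving $z(0^+)=a$ and $z((\pi/r)^-)=b$.

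For the left endpoint I would argue as follows. For every $\theta$ the points $t_0=\tau(\theta)e^{-i\theta}$ and $\overline{t_0}$ are both zeros of $P(t)+z(\theta)t^r$. By Lemma~\ref{lem:Athetapositive} and the boundedness of $\tau$ noted in the remark following it, $\tau(\theta)$ decreases to a finite limit, so $t_0\to\tau(0^+)\in\mathbb{R}^+$ and the conjugate pair coalesces there; hence $\tau(0^+)$ is a double zero of $P(t)+z(0^+)t^r$. Eliminating the parameter from $P+z(0^+)t^r=0$ and its $t$-derivative shows that $\tau(0^+)$ is a positive zero of $rP-tP'$ and that $z(0^+)=-P(\tau(0^+))/\tau(0^+)^r$ is the corresponding critical value of $-P/t^r$. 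To see that this critical point is exactly $t_a$, I would read \eqref{eq:theta_kdef} in the limit: as $\theta\to0^+$ each $\theta_k$ tends to $0$ or to $\pi$ according as $\tau_k<\tau(0^+)$ or $\tau_k>\tau(0^+)$, and the constraint \eqref{eq:sumthetak} with $l=n-1$, namely $\sum_k\theta_k\to(n-1)\pi$, forces exactly one $\tau_k$ to lie below $\tau(0^+)$; thus $\tau(0^+)\in(\tau_1,\tau_2)$. Since $rP-tP'>0$ on $(0,\tau_1]$ (there $P>0$ and $P'<0$), the smallest positive zero $t_a$ of $rP-tP'$ also lies in $(\tau_1,\tau_2)$, and showing that this gap contains a unique critical point then gives $\tau(0^+)=t_a$ and $z(0^+)=a$.

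For the right endpoint $\theta\to(\pi/r)^-$ the sum constraint gives $\sum_k\theta_k\to n\pi$, so every $\theta_k\to\pi$, and I would split on $r$. If $r>1$ then in \eqref{eq:taudef} one has $\sin\theta_k\to0$ while $\sin(\theta_k-\theta)\to\sin(\pi/r)\ne0$, whence $\tau(\theta)\to0$, $t_0\to0$, and, since $P(0)>0$ and $z(\theta)>0$, the identity $z=-P(t_0)/t_0^r$ forces $z((\pi/r)^-)=+\infty=b$ (consistent with $t_b=0$). If $r=1$ then both $\sin\theta_k$ and $\sin(\theta_k-\theta)$ vanish in the limit, so $\tau(\theta)$ tends to a finite positive value and $t_0=\tau e^{-i\theta}\to-\tau(\pi^-)<0$; the coalescing pair now yields a negative double zero, i.e. a negative zero of $R=P-tP'$, which by Lemma~\ref{lem:existenceinterval} is the unique such zero $t_b$. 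Hence $z(\pi^-)=-P(t_b)/t_b=b$. Combining the two endpoint computations with the monotonicity (which also yields $a<b$) gives that $z$ maps $(0,\pi/r)$ onto $(a,b)$.

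The main obstacle will be the identification of $z(0^+)$ with $a$ rather than with some larger critical value—that is, proving that the coalescence point is the \emph{smallest} positive critical point $t_a$. The angle bookkeeping only confines $\tau(0^+)$ to the gap $(\tau_1,\tau_2)$; to conclude I must show that each gap $(\tau_k,\tau_{k+1})$ contains exactly one positive zero of $rP-tP'$. I expect to obtain this from the real-rootedness of $R$ (Lemma~\ref{lem:existenceinterval}) together with a degree-and-sign count that separates the cases $r<n$, $r=n$, and $r>n$ and uses $rP-tP'>0$ on $(0,\tau_1]$; repeated zeros $\tau_k$ would be absorbed by a routine continuity argument.
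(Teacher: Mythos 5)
Your proposal follows the paper's own proof almost step for step: monotonicity of $z(\theta)$ (Lemma \ref{lem:zfunctheta}) reduces the lemma to the two endpoint limits; monotone boundedness of $\tau(\theta)$ (Lemma \ref{lem:Athetapositive} and the remark after it) forces the conjugate pair $\tau e^{\pm i\theta}$ to coalesce into a real multiple zero of the limiting polynomial, hence a zero of $t^{r-1}(rP-tP')$; the angle-sum constraint with $l=n-1$ confines $\lim_{\theta\to 0}\tau(\theta)$ to $[\tau_1,\tau_2]$; and the right endpoint splits into $r>1$ (where $\tau\to 0$ and $z\to\infty$) and $r=1$ (where the limit is the negative zero of $P-tP'$, unique by Lemma \ref{lem:existenceinterval}).

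The one step where you go beyond the paper is precisely the point you flag as the main obstacle: that $rP-tP'$ has exactly one zero in $(\tau_1,\tau_2)$. The paper compresses this into the clause ``by the interlacing of the zeros of $P$ and $P'$ and the definition of $t_a$,'' so you are not missing anything the paper actually supplies, and your proposed degree-and-sign count does close it --- but record the following detail for the case $1<r<n$: there $\deg(rP-tP')=n$, the gaps between consecutive zeros of $P$ account for only $n-1$ zeros, and the leftover zero must be chased to the \emph{negative} axis: the leading term of $rP-tP'$ is $(r-n)(-1)^n|a_n|t^n$, so $rP-tP'\to-\infty$ as $t\to-\infty$ while $rP(0)>0$, forcing a negative zero; only then does the count pin exactly one zero per gap. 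This is genuinely necessary: for $P=(1-t)(2-t)(3-t)$ and $r=2$ one gets $rP-tP'=t^3-11t+12$, which has a root near $-3.8$, so a count that tries to place all nonzero roots in $[\tau_1,\tau_n]\cup(\tau_n,\infty)$ cannot succeed (incidentally, this example shows the parenthetical ``(and non-negative)'' in Lemma \ref{lem:existenceinterval} should not be relied upon when $r>1$). For $r>n$ the leftover zero lies beyond $\tau_n$ (sign change of $-\tau_nP'(\tau_n)$ versus the behavior at $+\infty$), and for $r=n$ the degree drops to $n-1$, so the count closes in those cases exactly as you describe. One last small repair: in your $r=1$ endpoint argument, ``both sines vanish'' is a $0/0$ statement and does not by itself give $\tau(\pi^-)>0$; positivity follows instead from the fact that the limit $-\tau(\pi^-)$ must be a zero of $P-tP'$, while $P(0)-0\cdot P'(0)=P(0)>0$ shows $0$ is not such a zero.
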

\begin{proof}
Lemma \ref{lem:zfunctheta} implies that with the choice $l=n-1$,
$z(\theta)$ is a continuous, monotone increasing function on $(0,\pi/r)$.
Thus, it suffices to show $a':=\lim_{\theta\rightarrow0}z(\theta)=a$
and $b':=\lim_{\theta\rightarrow\pi/r}z(\theta)=b$. Since $\tau(\theta)e^{\pm i\theta}$
are two zeros of $P(t)+zt^{r}$ and $\tau(\theta)$ is a monotone
bounded function, the function $\tau(\theta)$ has to converge to
a real multiple zero of $P(t)+a't^{r}$when $\theta\rightarrow0$
or a real multiple zero of $P(t)+b't^{r}$ when $\theta\rightarrow\pi/r$.
Here, we use the convention that if $b'=\infty$ and $r>1$ then $0$
is a multiple zero of $P(t)+b't^{r}$. By looking at the derivatives
of $P(t)+a't^{r}$ and $P(t)+b't^{r}$, we see that the limits of
$\tau(\theta)$ as $\theta\rightarrow0$ and as $\theta\rightarrow\pi/r$
must be zeros of $t^{r-1}(rP(t)-P'(t))$ where the factor $t^{r-1}$
reflects the possible multiple zero at $0$ when $r>1$. The equation
$\sum_{k=1}^{n}\theta_{k}(\theta)=(n-1)\pi+r\theta$ implies that
$\tau_{1}\le t_{a'}\le\tau_{2}$ where $t_{a'}:=\lim_{\theta\rightarrow0}\tau(\theta)$.
Thus by the interlacing of the zeros of $P(t)$ and $P'(t)$ and the
definition of $t_{a}$ as the smallest positive real zero of $t^{r-1}(rP(t)-P'(t))$,
we must have $t_{a'}=t_{a}$ and consequently $a'=a$. Similarly,
we have $t_{b'}=0$ if $r>1$ and $t_{b'}<0$ if $r=1$ where $t_{b'}:=\lim_{\theta\rightarrow\pi/r}\tau(\theta)$.
By the definition of $t_{b}$ we have $t_{b'}=t_{b}$ and $b=b'$. 
\end{proof}
We now turn our attention to the second key element of the proof of
Theorem \ref{maintheorem}, namely the construction of the functions
$H(\theta;m)$ on $(0,\pi/r)$ with the property that $H(\theta;m)=0$
if and only if $H_{m}(z(\theta))=0$.

Recall that for each $0<\theta<\pi/r$ and $0\le l<n$, we set $t_{0}=\tau(\theta;l)e^{-i\theta}$.
Let $t_{1}=t_{0}e^{2i\theta}$ be its conjugate, and note that they
are both zeros of the polynomial $P(t)+z(\theta;l)t^{r}$. Let $t_{2},t_{3},\ldots,t_{\max\{n,r\}-1}$
denote the remaining zeros of $P(t)+z(\theta;l)t^{r}$, and let $q_{k}=t_{k}/t_{0}$,
$0\le k<\max\{n,r\}$. With this notation we may rearrange the equation
$P(t_{k})+z(\theta;l)t_{k}^{r}=0$ as 
\[
-z=\frac{\prod_{j=1}^{n}(\tau_{j}-t_{k})}{t_{k}^{r}}=\frac{\prod_{j=1}^{n}(\tau_{j}t_{0}^{-1}-q_{k})}{q_{k}^{r}}t_{0}^{n-r}.
\]
Changing of variables $\zeta_{k}:=q_{k}e^{-i\theta}$, $k=0,1,\ldots,\max\{n,r\}$,
and using (\ref{eq:t0form}) we arrive at the equivalent equation
\[
-z(t_{0}e^{i\theta})^{r-n}\zeta_{k}^{r}=\prod_{j=1}^{n}\left(\frac{\sin(\theta_{j}-\theta)}{\sin\theta_{j}}-\zeta_{k}\right),
\]
which, when combined with (\ref{eq:ztheta}), leads to 
\[
\prod_{j=1}^{n}\left(\frac{\sin(\theta_{j}-\theta)}{\sin\theta_{j}}-\zeta_{k}\right)+\zeta_{k}^{r}\prod_{j=1}^{n}\frac{\sin\theta}{\sin\theta_{j}}=0.
\]
We deduce that for any $0\le k<\max\left\{ n,r\right\} $, $t_{k}$
is a zero of $P(t)+z(\theta;l)t^{r}$ if and only if $\zeta_{k}$
is a zero of the polynomial 
\[
Q(\zeta):=\prod_{j=1}^{n}\left(\frac{\sin(\theta_{j}-\theta)}{\sin\theta}-\zeta\frac{\sin\theta_{j}}{\sin\theta}\right)+\zeta^{r}.
\]
Note that $\zeta_{0}=e^{-i\theta}$ and $\zeta_{1}=e^{i\theta}$ are
both zeros of $Q(\zeta)$.

Let $c$ be the leading coefficient in $t$ of the polynomial $P(t)+zt^{r}$.
If the all the zeros $t_{k}$, $0\le k<\max\left\{ n,r\right\} $,
of the denominator $P(t)+zt^{r}$ are distinct, then partial fraction
decomposition 
\begin{eqnarray*}
\sum_{m=0}^{\infty}H_{m}(z)t^{m} & = & \frac{1}{c}\prod_{k=1}^{\max\left\{ n,r\right\} -1}\frac{1}{t-t_{k}}\\
 & = & \frac{1}{c}\sum_{k=0}^{\max\left\{ n,r\right\} -1}\frac{1}{t-t_{k}}\prod_{l\ne k}\frac{1}{t_{k}-t_{l}}\\
 & = & -\frac{1}{c}\sum_{m=0}^{\infty}\left(\sum_{k=0}^{\max\left\{ n,r\right\} -1}\frac{1}{t_{k}^{m+1}}\prod_{l\ne k}\frac{1}{t_{k}-t_{l}}\right)t^{m}.
\end{eqnarray*}
Thus, $H_{m}(z)=0$ if and only if 
\[
\sum_{k=0}^{\max\left\{ n,r\right\} -1}\frac{1}{t_{k}^{m+1}}\prod_{l\ne k}\frac{1}{t_{k}-t_{l}}=0.
\]
Multiplying the above equation by $e^{(m+n)i\theta}t_{0}^{m+\max\left\{ n,r\right\} }$,
we see that $z$ is a zero of $H_{m}(z)$ if and only if $\theta$
is a zero of 
\begin{eqnarray}
H(\theta) & := & \sum_{k=0}^{\max\left\{ n,r\right\} -1}\frac{1}{\zeta_{k}^{m+1}}\prod_{l\ne k}\frac{1}{\zeta_{k}-\zeta_{l}}\nonumber \\
 & = & \sum_{k=0}^{\max\{n,r\}-1}\frac{1}{\zeta_{k}^{m+1}Q'(\zeta_{k})}.\label{eq:Htheta}
\end{eqnarray}
By symmetric reduction and Lemma \ref{lem:thetatuple}, $H(\theta)$
is a real-valued function of $\theta$ on $(0,\pi/r)$. Furthermore,
$H(\theta)$ is analytic in an open neighborhood of $(0,\pi/r)$ except
at some values of $\theta$ where $\zeta_{k}$ (or $t_{k}$), $0\le k<\max\left\{ n,r\right\} $,
are not all distinct. However from Lemma \ref{lem:thetatuple} and
(\ref{eq:ztheta}), $z$ is an analytic function of $\theta$ on a
neighborhood of $(0,\pi/r)$. Thus $H(\theta)$ has an analytic continuation
$-c(t_{0}e^{i\theta})^{m+\max\left\{ n,r\right\} }H_{m}(z(\theta))$
on an neighborhood of $(0,\pi/r)$ and consequently all the discontinuities
of the real-valued function $H(\theta)$ on $(0,\pi/r)$ are removable.
From now on, we treat $H(\theta)$ as a real-valued continuous function
on $(0,\pi/r)$.

If $\zeta_{k}$ is a zero of $Q(\zeta)$, then 
\begin{eqnarray}
Q'(\zeta_{k}) & = & \prod_{j=1}^{n}\left(\frac{\sin(\theta_{j}-\theta)}{\sin\theta}-\zeta\frac{\sin\theta_{j}}{\sin\theta}\right)\sum_{j=1}^{n}\frac{-\sin\theta_{j}}{\sin(\theta_{j}-\theta)-\zeta_{k}\sin\theta_{j}}+r\zeta_{k}^{r-1}\nonumber \\
 & = & \sum_{j=1}^{n}\frac{\zeta_{k}^{r}\sin\theta_{j}}{\sin(\theta_{j}-\theta)-\zeta_{k}\sin\theta_{j}}+r\zeta_{k}^{r-1}.\label{eq:Qprimeatzero}
\end{eqnarray}
In the instance $\zeta_{1}=e^{i\theta},$ we apply the identity 
\[
\sin(\theta_{j}-\theta)-e^{i\theta}\sin\theta_{j}=-\cos\theta_{j}\sin\theta-i\sin\theta\sin\theta_{j}=-\sin\theta e^{i\theta_{j}}
\]
and obtain the equation 
\[
Q'(e^{i\theta})=e^{(r-1)i\theta}\left(-\sum_{j=1}^{n}\frac{\sin\theta_{j}}{\sin\theta}e^{i(\theta-\theta_{j})}+r\right).
\]
The first two terms of (\ref{eq:Htheta}) are complex conjugates,
whose sum is 
\begin{equation}
2\frac{\Re\left(e^{i(m+1)\theta}Q'(e^{i\theta})\right)}{|Q'(e^{i\theta})|^{2}}=\frac{2}{|Q'(e^{i\theta})|^{2}}.\left(A(\theta)\cos(m+r)\theta-B(\theta)\sin(m+r)\theta\right)\label{eq:mainterm}
\end{equation}
where $A(\theta)$ is as in (\ref{eqn:athetadef}), and ${\displaystyle {B(\theta)=\sum_{j=1}^{n}\frac{\sin\theta_{j}}{\sin\theta}\sin(\theta_{j}-\theta)}}$.

The next lemma describes the behavior of $A(\theta)$ near the end-points
of the interval $(0,\pi/r)$. 
\begin{lem}
\label{lem:AthetaAsymp}Let $\rho$ be the multiplicty of the smallest
zero of $P(t)$. When $\rho=1$ and $\theta$ sufficiently small,
$A(\theta)>\theta^{4}$. Moreover, when $r=1$ and $\theta$ sufficiently
close to $\pi$, $A(\theta)>(\pi-\theta)^{4}$. 
\end{lem}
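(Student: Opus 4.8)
The plan is to determine the exact order of vanishing of $A(\theta)$ at each relevant endpoint. Since Lemma~\ref{lem:Athetapositive} already gives $A(\theta)>0$ throughout $(0,\pi/r)$, only the \emph{rate} at which $A$ decays matters: I will show that $A(\theta)$ vanishes to order exactly two at each endpoint, with a positive leading coefficient, so that $A(\theta)\sim c\,\theta^{2}$ near $0$ (resp. $A(\theta)\sim c\,(\pi-\theta)^{2}$ near $\pi$) with $c>0$. Since $\theta^{2}\gg\theta^{4}$ as $\theta\to0$ (resp. $(\pi-\theta)^{2}\gg(\pi-\theta)^{4}$), both claimed inequalities then follow for $\theta$ near the endpoint.

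First I would record the endpoint behavior of the angles. As $\theta\to0^{+}$, relation \eqref{eq:taudef} forces $\tau(\theta)\to t_{a}$ with $\tau_{1}\le t_{a}\le\tau_{2}$; since $\rho=1$ gives $\tau_{1}<\tau_{2}$, one checks $t_{a}\in(\tau_{1},\tau_{2})$ \emph{strictly}. Indeed, writing $rP-tP'=P\cdot F$ with $F(t)=r-n+\sum_{k}\tau_{k}/(\tau_{k}-t)$, the function $F$ increases from $-\infty$ to $+\infty$ on $(\tau_{1},\tau_{2})$ (as $F'(t)=\sum_{k}\tau_{k}/(\tau_{k}-t)^{2}>0$), so $t_{a}$ is its unique, and simple, zero there. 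Consequently no $\theta_{k}$ may tend to an interior angle, and the ordering together with $\sum_{k}\theta_{k}=r\theta+(n-1)\pi$ yields $\theta_{1}\to0$ and $\theta_{k}\to\pi$ for $2\le k\le n$. Expanding \eqref{eq:taudef} to first order gives $\theta_{1}=c_{1}\theta+o(\theta)$ and $\theta_{k}=\pi-d_{k}\theta+o(\theta)$ with $c_{1}=t_{a}/(t_{a}-\tau_{1})$ and $d_{k}=t_{a}/(\tau_{k}-t_{a})$, all finite and positive precisely because $\tau_{1}<t_{a}<\tau_{2}\le\tau_{k}$. Feeding these into \eqref{eqn:athetadef} shows each summand of $\sum_{k}(\sin\theta_{k}/\sin\theta)\cos(\theta_{k}-\theta)$ tends to $\theta_{k}'(0)$, so the sum tends to $\sum_{k}\theta_{k}'(0)=r$ by the derivative of the angle-sum identity; that is, $A(0^{+})=0$, and the content of the lemma lies in the next order.

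To extract that order I would use \eqref{eq:dtaudtheta} in the form
\[
A(\theta)=-\frac{\tau'(\theta)}{\sin\theta}\sum_{k=1}^{n}\frac{\sin^{2}(\theta_{k}-\theta)}{\tau_{k}}.
\]
The first-order expansions above give $\sum_{k}\sin^{2}(\theta_{k}-\theta)/\tau_{k}\sim C_{0}\theta^{2}$ with $C_{0}=(c_{1}-1)^{2}/\tau_{1}+\sum_{k\ge2}(d_{k}+1)^{2}/\tau_{k}>0$, while $1/\sin\theta\sim1/\theta$. Everything thus reduces to showing $\tau'(\theta)\sim(\text{nonzero const})\cdot\theta$, equivalently $\tau(\theta)-t_{a}\sim a_{2}\theta^{2}$ with $a_{2}\ne0$. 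This is the crux. It follows from a local analysis of the coalescing pair: $\tau(\theta)e^{\pm i\theta}$ are two zeros of $P(t)+z(\theta)t^{r}$ that merge at $t_{a}$ as $\theta\to0$, so $t_{a}$ is a zero of $g_a(t):=P(t)+a t^{r}$ of multiplicity at least two. Because $t_{a}$ is a \emph{simple} zero of $rP-tP'$ (the $F'>0$ computation), the identity $t_{a}^{2}(P''(t_{a})+ar(r-1)t_{a}^{r-2})=-t_{a}(rP-tP')'(t_{a})$ shows $g_a''(t_a)\ne0$, so the multiplicity is exactly two. The standard splitting of a nondegenerate double zero then gives, for $z$ slightly above $a$ (which holds for small $\theta>0$ by Lemma~\ref{lem:zonto}), $\mathrm{Im}(t_{0})\sim\text{const}\cdot\sqrt{z-a}$ and $\mathrm{Re}(t_{0})-t_{a}=O(z-a)$; matching $\mathrm{Im}(t_{0})=-\tau\sin\theta\sim-t_{a}\theta$ forces $z-a\sim\text{const}\cdot\theta^{2}$ and hence $\tau-t_{a}\sim a_2\theta^{2}$. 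Since $A(\theta)>0$ forces $\tau'(\theta)<0$, the leading coefficient has the right sign and $A(\theta)\sim-2a_{2}C_{0}\theta^{2}>\theta^{4}$ for all small $\theta$.

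The second assertion is entirely parallel, carried out at $\theta=\pi/r=\pi$ (so $r=1$). Here $\sin\theta\to0$ again, all $\theta_{k}\to\pi$, and $\tau(\theta)e^{\pm i\theta}$ coalesce at a merge point $t_{b}<0$, a double zero of $P(t)+bt$. The required nondegeneracy is that $t_{b}$ be a simple zero of $P-tP'$, which is exactly the last sentence of Lemma~\ref{lem:existenceinterval} (the unique negative zero), reconfirmed by the same $F'>0$ argument since $t_b$ is not a zero of $P$. Repeating the computation with $\pi-\theta$ in place of $\theta$ yields $A(\theta)\sim c\,(\pi-\theta)^{2}>(\pi-\theta)^{4}$ near $\pi$. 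The main obstacle throughout is precisely this nondegeneracy step: one must rule out that the tracked pair merges into a zero of order higher than two (which would replace $\theta^{2}$ by a lower power in $\tau-t_{a}$ and could break the comparison), and the positivity of $F'$ is what guarantees the merge point is an honest simple critical point of $-P(t)/t^{r}$.
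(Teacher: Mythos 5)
Your reduction is sound as far as it goes, and it is genuinely different from the paper's argument: the paper never touches $\tau'$, but instead reruns the proof of Lemma \ref{lem:Athetapositive} with the inequality \eqref{eq:sineineq} sharpened to \eqref{eq:sinedifference}, using the law-of-sines fact that every $\eta_{k}$ is comparable to $\theta$ (which is where $\rho=1$ enters, since it places $t_{a}$ strictly between $\tau_{1}$ and $\tau_{2}$); each application of \eqref{eq:sinedifference} then gains a term of order $\theta^{3}$ in $A(\theta)\sin\theta$, giving $A(\theta)\gtrsim\theta^{2}$ directly. In your version, the identity \eqref{eq:dtaudtheta}, the endpoint asymptotics $\theta_{1}\sim c_{1}\theta$ and $\pi-\theta_{k}\sim d_{k}\theta$, and the evaluation $\sum_{k}\sin^{2}(\theta_{k}-\theta)/\tau_{k}\sim C_{0}\theta^{2}$ with $C_{0}>0$ are all correct, so the lemma is correctly reduced to the pointwise lower bound $-\tau'(\theta)\gtrsim\theta$, i.e.\ to $\tau(\theta)-t_{a}$ vanishing \emph{exactly} to second order.

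That last step is where your proof has a genuine gap, and it is not the gap you flagged. The nondegeneracy $g_{a}''(t_{a})\neq0$ yields the splitting $t_{0}=t_{a}-i\gamma\sqrt{z-a}+\delta_{2}(z-a)+O((z-a)^{3/2})$ with $\gamma^{2}=2t_{a}^{r}/g_{a}''(t_{a})$ and $\delta_{2}$ real, hence only the \emph{upper} bound $\tau-t_{a}=O(\theta^{2})$; the modulus satisfies $\tau-t_{a}=\bigl(\delta_{2}+\tfrac{\gamma^{2}}{2t_{a}}\bigr)(z-a)+\cdots$, and nothing in the nondegeneracy prevents the exact cancellation $\delta_{2}=-\gamma^{2}/(2t_{a})$, i.e.\ $a_{2}=0$. (The family $t^{2}-2\cos(\sqrt{w})\,t+1$, whose zeros $e^{\pm i\sqrt{w}}$ split off a nondegenerate double zero at $1$ while their modulus stays identically $1$, shows that ``nondegenerate splitting'' alone can never control the modulus to second order; the needed inequality must come from the specific structure of $P(t)+zt^{r}$.) Your fallback, that $A>0$ forces $\tau'<0$, gives only $a_{2}\le0$, and if $a_{2}=0$ your argument produces no lower bound whatsoever: one could then have $A\sim c\,\theta^{4}$ with $c\le1$, or smaller, and $A>\theta^{4}$ is exactly what is in question. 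To close the gap you must prove $a_{2}<0$ strictly; carrying the expansion one order further gives $a_{2}=\frac{t_{a}^{2}g_{a}'''(t_{a})}{6\,g_{a}''(t_{a})}-\frac{(r-1)t_{a}}{2}$, and its negativity does follow from sign considerations (at $t_{a}$, which lies to the left of every zero of $P'$, $P''$, $P'''$, one has $P(t_{a})<0$, $P''(t_{a})\ge0$, $P'''(t_{a})\le0$, with the borderline equalities occurring only in the excluded cases $(n,r)=(1,1),(2,1)$) --- but this computation is precisely the missing content, and in addition you would need to justify differentiating the asymptotic for $\tau$ (e.g.\ by showing $\tau$ extends analytically and evenly across $\theta=0$ via its Puiseux expansion in $\sqrt{z-a}$). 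The same gap recurs verbatim in your treatment of the endpoint $\theta\to\pi$ when $r=1$.
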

\begin{proof}
We follow the proof of Lemma \ref{lem:Athetapositive} and replace
the identity \eqref{eq:sineineq} by the identity 
\begin{eqnarray}
\sin\alpha+\sin\beta-\sin(\alpha+\beta) & = & \sin\alpha(1-\cos\beta)+\sin\beta(1-\cos\alpha)\nonumber \\
 & > & \frac{\alpha\beta^{2}}{4}+\frac{\beta\alpha^{2}}{4}\label{eq:sinedifference}
\end{eqnarray}
for all $\alpha,\beta>0$ sufficiently small.

In the case $\theta$ is sufficiently small and $\rho=1$, we note
that $\eta_{k}$, $1\le k\le n$, are close to $0$. Moreover, the
law of sines (see Figure \ref{fig:lawofsines}) implies that $\sin\eta_{k}/\sin\theta$,
or equivalently $\eta_{k}/\theta$, is approaching $t_{a}/|t_{a}-\tau_{k}|$
when $\theta\rightarrow0$. The lemma follows from the proof of Lemma
\ref{lem:Athetapositive} after we apply \eqref{eq:sinedifference}
in \eqref{eq:firstsumsine}, \eqref{eq:secondsumsine} or \eqref{eq:thirdsumsine}.

Similarly, when $\theta$ is close to $\pi$ and $r=1$, $\sin\eta_{k}/\sin(\pi-\theta)$,
or equivalently $\eta_{k}/(\pi-\theta)$, is approaching $|t_{b}|/|t_{b}-\tau_{k}|$
for $2\le k\le n$. We apply \eqref{eq:sinedifference} in \eqref{eq:secondsumsine}
or \eqref{eq:thirdsumsine} and reach the corresponding conclusion. 
\end{proof}
Lemmas \ref{lem:Athetapositive}, \ref{lem:zonto}, and \ref{lem:AthetaAsymp}
establish some key properties of the functions $\tau(\theta),z(\theta)$
and $A(\theta)$. Equipped with these results, we can now prove a
key proposition about the distribution of the zeros of the denominator
$P(t)+zt^{r}$ as a polynomial in $t$. 
\begin{prop}
\label{prop:zerodenom}Let $\theta\in(0,\pi/r)$ be a fixed angle
with $z:=z(\theta)$ and $\tau:=\tau(\theta)$. The only zeros in
$t$ of $P(t)+zt^{r}$ in the closed disk $C_{1}$ centered at the
origin with radius $\tau$ are $t_{0,1}=\tau e^{\pm i\theta}$. 
\end{prop}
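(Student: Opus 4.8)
My plan is to rescale by $\zeta=t/\tau$, so that the zeros of $P(t)+zt^{r}$ in the closed disk $C_{1}$ are precisely the zeros $\zeta_{k}=t_{k}/\tau$ of the polynomial $Q(\zeta)$ lying in the closed unit disk. Since we already know that $\zeta_{0}=e^{-i\theta}$ and $\zeta_{1}=e^{i\theta}$ are zeros of $Q$ on the unit circle, the proposition is equivalent to the statement that these are the \emph{only} zeros of $Q$ with $|\zeta|\le1$. I would open by checking that they are simple: using $\sin(\theta_{j}-\theta)-e^{i\theta}\sin\theta_{j}=-\sin\theta\,e^{i\theta_{j}}$ in (\ref{eq:Qprimeatzero}) gives $Q'(e^{i\theta})=e^{(r-1)i\theta}(A(\theta)+iB(\theta))$ with $A(\theta)$ as in (\ref{eqn:athetadef}); since $A(\theta)>0$ by Lemma \ref{lem:Athetapositive}, we have $Q'(e^{i\theta})\neq0$, so $e^{\pm i\theta}$ are simple boundary zeros and it remains to exclude any other zero in the closed unit disk.

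The mechanism I would use to do the counting is the argument principle applied to $-P(t)/(zt^{r})$ on the circle $|t|=\tau$. Parametrizing $t=\tau e^{-i\phi}$ and using $\arg\!\big(-P(t)/(zt^{r})\big)=\pi+\sum_{k}\arg(\tau_{k}-t)+r\phi$, the number of zeros of $P(t)+zt^{r}$ inside $C_{1}$ equals $r$ plus the winding number of $-P(t)/(zt^{r})-1$ about the origin as $\phi$ runs over the circle, with two small indentations that exclude the simple boundary zeros at $\phi=\pm\theta$. Thus everything reduces to showing that this image curve passes through the value $1$ only at $\phi=\pm\theta$; equivalently, that the circle $|t|=\tau$ meets the level set $\{-P(t)/t^{r}=z\}$ nowhere except at $\tau e^{\pm i\theta}$, and that no zero lies strictly inside.

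The structural input that makes this winding computable comes from the branch analysis already in place. On the real axis the values of $-P(t)/t^{r}$ are confined by the interval $(a,b)$: since $z=z(\theta)\in(a,b)$ by Lemma \ref{lem:zonto} and $0<\tau(\theta)<t_{a}$ by the monotonicity in Lemma \ref{lem:Athetapositive}, the maximal value $a$ of $-P(t)/t^{r}$ on $(\tau_{1},t_{a})$ is strictly below $z$, which removes the real zeros inside $C_{1}$. For a \emph{non-real} zero, realness of $z$ forces a conjugate pair $\rho e^{\pm i\phi}$; by Lemma \ref{lem:zerosRdisct} its radius is a real zero of $S_{\phi}$, and by Lemma \ref{lem:thetatuple} together with Remark \ref{rem:anglesumdecrease} the pair is governed by a unique branch $(\phi,l')$ with $z(\phi;l')=z$ and $\rho=\tau(\phi;l')$; the sign $(-1)^{\,n-l'-1}$ in (\ref{eq:ztheta}) and $z>0$ force $l'\equiv n-1\pmod2$. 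On the principal branch $l'=n-1$, the strict monotonicity of $z(\theta)$ (Lemma \ref{lem:zonto}) pins $\phi=\theta$ and $\rho=\tau$, producing exactly the boundary pair.

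\textbf{The main obstacle} is excluding the remaining branches $l'\le n-3$, that is, proving that for a fixed $z>0$ the principal branch $l=n-1$ realizes the \emph{smallest} modulus among all conjugate-pair zeros of $P(t)+zt^{r}$. This is precisely the input the winding count needs in order to evaluate to $r$, hence to force zero interior zeros. I expect to establish it by leveraging the monotonicity of $\tau(\theta)$ and $z(\theta)$ (Lemmas \ref{lem:Athetapositive} and \ref{lem:zonto}) together with the endpoint asymptotics of $A(\theta)$ from Lemma \ref{lem:AthetaAsymp}, which control how the non-principal pairs sit relative to the circle $|t|=\tau$ as $\theta$ approaches the ends of $(0,\pi/r)$.
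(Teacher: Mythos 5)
Your reduction to the unit-disk statement about $Q(\zeta)$ and your observation that $Q'(e^{i\theta})=e^{(r-1)i\theta}(A(\theta)+iB(\theta))\ne0$ are both correct, but the proposal has genuine gaps that the winding-number frame does not repair. First, the claimed reduction is unsound: the argument principle needs the actual winding number of $-P(t)/(zt^{r})-1$ along the indented circle, and knowing that the curve meets the value $1$ only at $\phi=\pm\theta$ does not determine how many times it winds around that value. You never compute this winding number, and nothing in your outline does; conversely, if you could complete the real-axis and branch analysis in your last two paragraphs, you would already know there are no other zeros in the closed disk and the argument principle would be superfluous. Second, your branch analysis silently assumes that a non-real zero $\rho e^{i\phi}$ in the closed disk has argument $\phi\in(0,\pi/r)$; Lemma \ref{lem:thetatuple} constructs the branch functions $z(\cdot;l)$ and $\tau(\cdot;l)$ only on $(0,\pi/r)$, so without this there is no pair $(\phi,l')$ to invoke. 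For $r=1$ this is vacuous, but for $r>1$ it is a real obstruction, and the paper devotes a separate geometric argument to it: a zero in $C_{1}$ is forced by \eqref{eqn:circles} into $C_{1}\cap C_{2}$, where $C_{2}$ is the disk centered at $\tau_{1}$ of radius $|\tau e^{i\theta}-\tau_{1}|$, and then the estimate $\sin\theta_{\max}^{t^{*}}=\sin\theta/\sin(\theta_{1}-\theta)<\sin(\pi/r)$ bounds the argument. Your outline never introduces $C_{2}$. (Relatedly, your real-axis exclusion covers only the nonnegative axis; for odd $r$, in particular $r=1$, the function $-P(t)/t^{r}$ is positive on $(-\infty,0)$ and negative real zeros must be ruled out, which the paper does via Lemma \ref{lem:existenceinterval}.)

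The deeper issue is the one you flagged yourself as the main obstacle: excluding the branches $l'<n-1$. The global extremal statement you hope for (that the principal branch realizes the smallest modulus among all conjugate-pair zeros) is proved nowhere, and the endpoint asymptotics of $A(\theta)$ in Lemma \ref{lem:AthetaAsymp} are not the right tool — in the paper those asymptotics serve only the sign analysis of $H(\theta)$ in Proposition \ref{prop:signchangegeneral}. What the paper actually does is a squeeze that works uniformly in $l$: if $t^{*}=\tau(\theta^{t^{*}};l)e^{i\theta^{t^{*}}}$ is a zero in the closed disk on branch $l$, then $\tau(\theta^{t^{*}};l)\le\tau$ because $t^{*}\in C_{1}$, while $\tau\le\tau(\theta;l)$ by Remark \ref{rem:anglesumdecrease} (lowering $l$ lowers the angle sum, hence raises $\tau$). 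By continuity there is $\widetilde{\theta}$ between $\theta^{t^{*}}$ and $\theta$ with $\tau(\widetilde{\theta};l)=\tau$. Two different monotonicity facts are then played against each other: at the fixed modulus $\tau$, the quantity $\prod_{k}|\tau e^{i\psi}-\tau_{k}|/\tau^{r}$ is increasing in the angle $\psi$, so $\widetilde{\theta}\ge\theta$ if and only if $z(\widetilde{\theta};l)\ge z(\theta)$; and $z(\cdot;l)$ is monotone increasing (since $z>0$ forces $n-l'-1$ even, as you noted), so $\widetilde{\theta}\ge\theta^{t^{*}}$ if and only if $z(\widetilde{\theta};l)\ge z(\theta^{t^{*}};l)=z(\theta)$. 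Since $\widetilde{\theta}$ lies between $\theta$ and $\theta^{t^{*}}$, these equivalences force $\widetilde{\theta}=\theta=\theta^{t^{*}}$, hence $t^{*}=\tau e^{i\theta}$. This squeeze is the missing idea; without it, or a worked-out substitute, your proposal does not close.
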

\begin{proof}
Without loss of generality, we may assume the zeros under consideration
lie in the upper half plane. If $t$ is a zero of $P(t)+zt^{r}$,
then 
\begin{equation}
\frac{\prod_{k=1}^{n}(\tau_{k}-t)}{t^{r}}=\frac{\prod_{k=1}^{n}(\tau_{k}-\tau e^{i\theta})}{(\tau e^{i\theta})^{r}}.\label{eqn:circles}
\end{equation}
Let $C_{1}$ be as in the statement of the theorem, let and $C_{2}$
be the closed disk centered at $\tau_{1}$ with radius $|\tau e^{i\theta}-\tau_{1}|$
(see Figures \ref{fig:ZerosOutsideDiskbigrho} and \ref{fig:ZerosOutsideDiskrhoeq1}
depending on the multiplicity of the smallest zero of $P(t)$). Assume,
by way of contradiction, that $t^{*}$ is a zero of $P(t)+zt^{r}$
which lies inside $C_{1}$. Equation (\ref{eqn:circles}) implies
that $t^{*}$ must then lie in the closed region $C_{1}\cap C_{2}$.
\begin{figure}[H]
\includegraphics[scale=0.4]{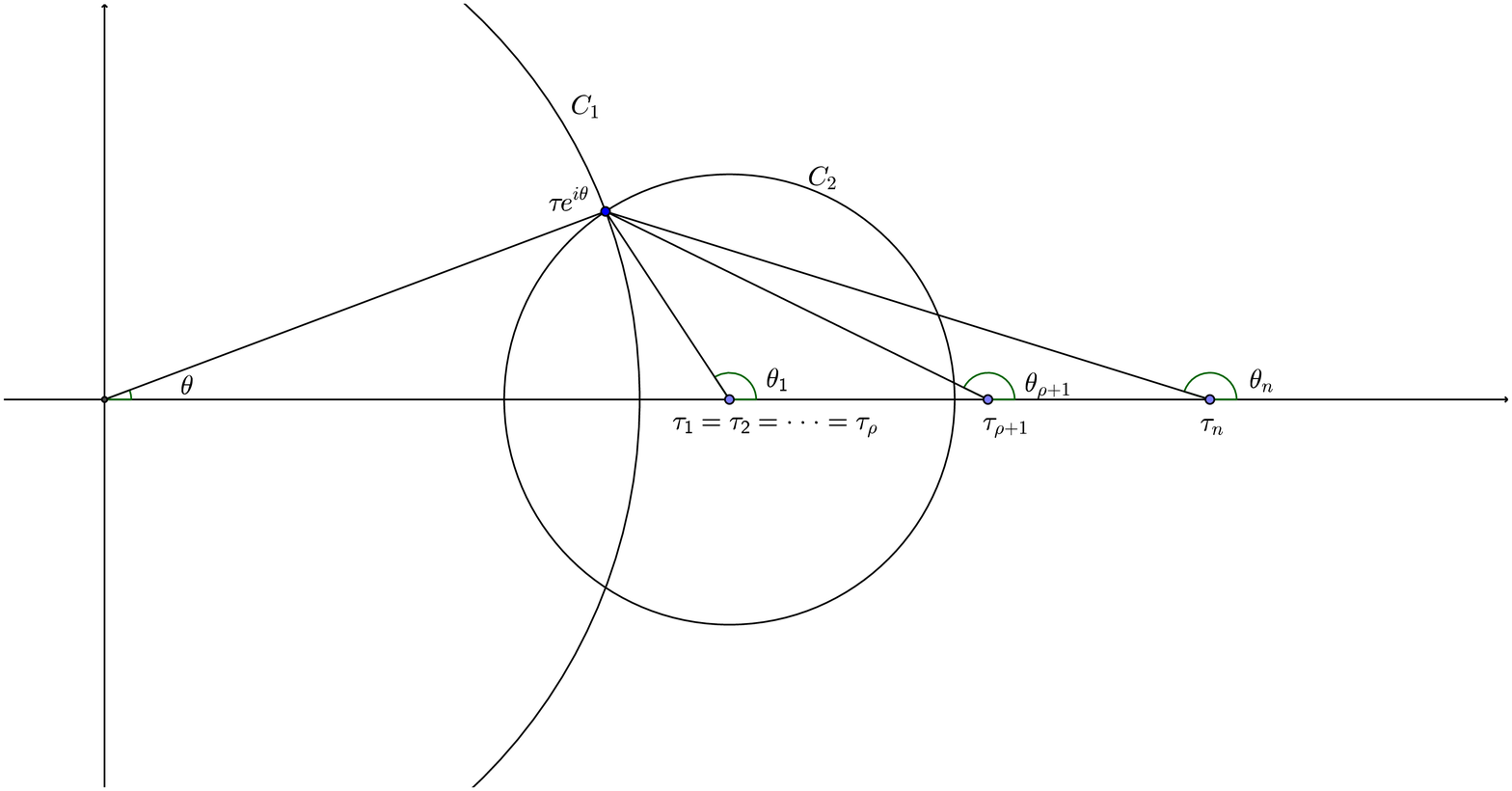}

\caption{\label{fig:ZerosOutsideDiskbigrho}Zeros of $P(t)+zt^{r}$ when $\rho>1$}
\end{figure}

\begin{figure}[H]
\includegraphics[scale=0.4]{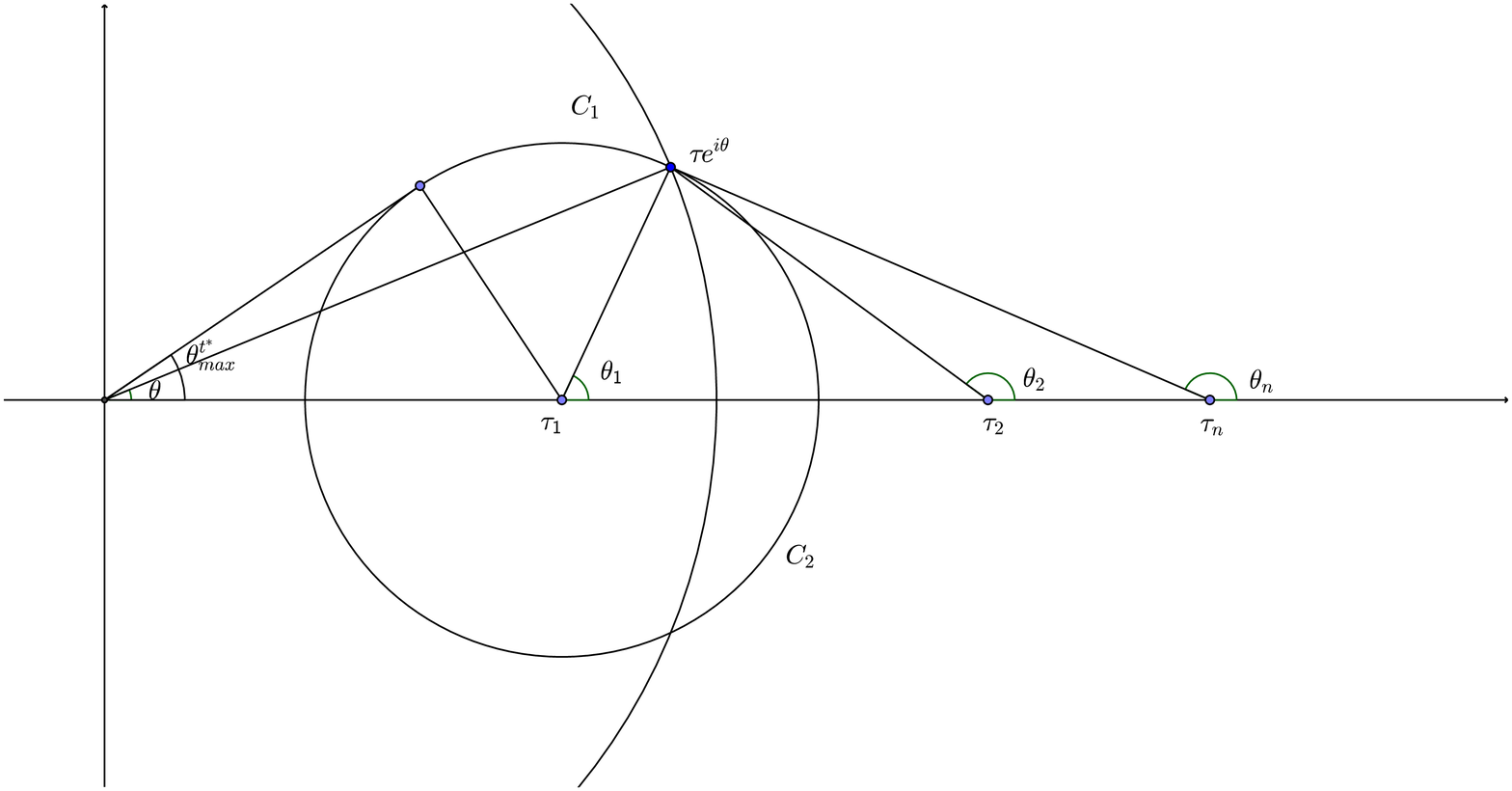}

\caption{\label{fig:ZerosOutsideDiskrhoeq1}Zeros of $P(t)+zt^{r}$ when $\rho=1$}
\end{figure}


We first argue that $t^{*}\notin\mathbb{R}$. If $\theta\in(0,\pi/r)$
and $P(t^{*})+z(t^{*})^{r}=0$, Lemma \ref{lem:zonto} implies that
$z\in(a,b)$, and hence $a<-P(t^{*})/(t^{*})^{r}<b$. Since the derivative
${\displaystyle {\frac{d}{dt}(-P(t)/t^{r})}}$ does not vanish on
$(0,t_{a})$ and $P(0)>0$, we conclude that $-P(t)/t^{r}$ is strictly
increasing on $(0,t_{a})$ with $-P(t_{a})/t_{a}^{r}=a$. It follows
that $t^{*}\notin[0,t_{a}]$. On the other hand, $\tau_{1}<t_{a}$,
and By Lemma \ref{lem:Athetapositive}, $\tau<t_{a}$ as well. We
deduce that regardless of the value of $\rho$, $C_{1}\cap C_{2}\cap\mathbb{R}\subsetneq[0,t_{a}]$,
and $P(t)+zt^{r}$ has no real non-negative zeros. If $r=1$, then
$-P(t)/t\geq b$ on $(-\infty,0)$ by Lemma \ref{lem:existenceinterval},
and hence $t^{*}\notin(-\infty,0)$. It now follows that $t^{*}\notin C_{1}\cap C_{2}\cap\mathbb{R}$
for any $r\geq1$.

We next claim that the argument $\theta^{t^{*}}$ of $t^{*}$ lies
in the interval $(0,\pi/r)$. This claim is trivial when $r=1$, since
$t^{*}\notin\mathbb{R}$. When $r>1$, it suffices to consider $\theta_{1}-\theta<\pi/2$,
for if $\theta_{1}-\theta\geq\pi/2$, then $t^{*}\in C_{1}\cap C_{2}$
forces $\theta^{t}\le\theta<\pi/r$ (see Figure \ref{fig:ZerosOutsideDiskbigrho}).
In addition, the equality $\sum_{k=1}^{n}\theta_{k}=r\theta+(n-1)\pi$
implies that $\theta_{1}>r\theta$, and consequently we may restrict
our attention to angles which satisfy $\theta<\pi/(2r-2).$ Since
$t^{*}\in C_{1}\cap C_{2}$, the largest possibe value of $\theta^{t^{*}}$,
denoted by $\theta_{\max}^{t^{*}}$, satisfies 
\[
\sin\theta_{\max}^{t^{*}}=\frac{\sin\theta}{\sin(\theta_{1}-\theta)}<\frac{\sin\theta}{\sin(r-1)\theta}
\]
(see Figure \ref{fig:ZerosOutsideDiskrhoeq1}). We claim that the
right side of the above inequality is an increasing function of $\theta$.
Indeed, computing its derivative we obtain 
\[
\frac{\cos\theta\sin(r-1)\theta-(r-1)\sin\theta\cos(r-1)\theta}{\sin^{2}(r-1)\theta}=\frac{r\sin(r-2)\theta-(r-2)\sin r\theta}{2\sin^{2}(r-1)\theta},
\]
whose numerator is nonnegative, since it vanishes when $\theta=0$
and it is non-decreasing. Thus 
\[
\frac{\sin\theta}{\sin(r-1)\theta}\le\sin\frac{\pi}{2(r-1)}\le\sin\frac{\pi}{r},
\]
and consequently $\theta^{t^{*}}\leq\theta_{\max}^{t^{*}}<\pi/r$.\\
 Given now that $\theta^{t^{*}}\in(0,\pi/r)$, equations \eqref{eq:proddistances},
\eqref{eq:theta_kdef}, and \eqref{eq:sumthetak} imply that for some
$0\le l<n$ the equality 
\[
\sum_{k=1}^{n}\theta_{k}(\theta^{t^{*}};l)=r\theta^{t^{*}}+l\pi,
\]
holds, along with $z(\theta^{t^{*}};l)=z(\theta)$. The latter equality
implies in particular that $z(\theta^{t^{*}};l)$ is positive, and
therefore $n-l-1$ must be even. Consequently, $z(\theta;l)$ is monotone
increasing in $\theta$. Observe that 
\[
\tau(\theta^{t^{*}};l)\leq\tau\leq\tau(\theta;l),
\]
where the first inequality follows from $t^{*}\in C_{1}\cap C_{2}$,
and the second inequality follows from Remark \ref{rem:anglesumdecrease}.
By continuity, there is and angle $\widetilde{\theta}$ between $\theta$
and $\theta^{t^{*}}$ so that $\tau=\tau(\widetilde{\theta};l)$.
It follows that $\widetilde{\theta}\ge\theta$ if and only if 
\[
z(\widetilde{\theta};l)=\frac{\prod|\tau e^{i\widetilde{\theta}}-\tau_{k}|}{|\tau|^{r}}\ge\frac{\prod|\tau e^{i\theta}-\tau_{k}|}{|\tau|^{r}}=z(\theta).
\]
On the other hand, $z(\theta;l)$ is monotone increasing, and hence
$\widetilde{\theta}\ge\theta^{t^{*}}$ if and only if $z(\widetilde{\theta};l)\ge z(\theta^{t^{*}};l)=z(\theta)$.
We conclude that in fact $\widetilde{\theta}=\theta^{t^{*}}=\theta$,
and therefore $t^{*}=\tau e^{i\theta}$, since this is the only point
in $C_{1}\cap C_{2}$ with argument $\theta$. The proof is complete. 
\end{proof}
With $t_{0,1}=\tau e^{\pm i\theta}$, $q_{k}=t_{k}/t_{0}$ and $\zeta_{k}=q_{k}e^{-i\theta}$,
$0\le k<\max\left\{ n,r\right\} ,$ the following proposition is equivalent
to Proposition \ref{prop:zerodenom}. 
\begin{prop}
\label{prop:zerosQ}If $0<\theta<\pi/r$, then besides the two trivial
zeros $\zeta_{0,1}=e^{\pm i\theta}$, all the zeros $\zeta_{k}$,
$2\le k<\max\left\{ n,r\right\} $, of the polynomial 
\begin{equation}
Q(\zeta)=\prod_{j=1}^{n}\left(\frac{\sin(\theta_{j}-\theta)}{\sin\theta}-\zeta\frac{\sin\theta_{j}}{\sin\theta}\right)+\zeta^{r},\label{eq:Qzeta}
\end{equation}
lie outside the closed unit disk. 
\end{prop}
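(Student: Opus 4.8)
The plan is to read off Proposition~\ref{prop:zerosQ} from Proposition~\ref{prop:zerodenom}, exploiting the fact that the change of variables connecting $Q(\zeta)$ with $P(t)+zt^{r}$ is nothing more than a positive real dilation. Recall that $t_{0}=\tau e^{-i\theta}$, $q_{k}=t_{k}/t_{0}$, and $\zeta_{k}=q_{k}e^{-i\theta}$; substituting the first of these into the last gives
\[
\zeta_{k}=\frac{t_{k}}{t_{0}}\,e^{-i\theta}=\frac{t_{k}}{\tau e^{-i\theta}}\,e^{-i\theta}=\frac{t_{k}}{\tau}.
\]
Since $\tau=\tau(\theta)>0$, the correspondence $t\mapsto t/\tau$ is a dilation of $\mathbb{C}$ sending the closed disk $C_{1}$ of radius $\tau$ onto the closed unit disk, and sending the distinguished zeros $t_{0,1}=\tau e^{\pm i\theta}$ to $\zeta_{0,1}=e^{\pm i\theta}$.

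First I would make the bijection of zero sets precise. The cleanest route is to verify, using the law-of-sines identity~\eqref{eq:taudef} (equivalently, that $\prod_{k=1}^{n}\tau_{k}\sin\theta_{k}/\sin(\theta_{k}-\theta)=\tau^{n}$) together with \eqref{eq:t0form} and \eqref{eq:taukt0diff}, that $Q(\zeta)$ is a nonzero constant multiple of $P(\tau\zeta)+z(\tau\zeta)^{r}$. Granting this, the zeros of $Q$ are exactly the numbers $t_{k}/\tau$ as $t_{k}$ ranges over the zeros of $P(t)+zt^{r}$; alternatively, one may simply invoke the equivalence $P(t_{k})+zt_{k}^{r}=0 \iff Q(\zeta_{k})=0$ already derived before Proposition~\ref{prop:zerodenom} and note that both polynomials have degree $\max\{n,r\}$, so the correspondence exhausts all zeros. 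In either case, $|\zeta_{k}|>1$ if and only if $|t_{k}|>\tau$, i.e. if and only if $t_{k}$ lies strictly outside $C_{1}$.

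With the dictionary in place, Proposition~\ref{prop:zerodenom} finishes the argument at once: its conclusion is that the only zeros of $P(t)+zt^{r}$ in the closed disk $C_{1}$ are $t_{0,1}=\tau e^{\pm i\theta}$, so every other zero satisfies $|t_{k}|>\tau$. Dividing through by $\tau$, the only zeros of $Q(\zeta)$ in the closed unit disk are $\zeta_{0,1}=e^{\pm i\theta}$, while $|\zeta_{k}|>1$ for every $2\le k<\max\{n,r\}$. This is precisely the assertion of Proposition~\ref{prop:zerosQ}.

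I do not expect a genuine obstacle here, since all of the analytic work --- excluding real zeros from $C_{1}\cap C_{2}$ and bounding the argument of a hypothetical interior zero --- was already carried out in proving Proposition~\ref{prop:zerodenom}. The only points demanding attention are bookkeeping: confirming that the dilation factor is exactly $\tau$ (so that $C_{1}$ matches the unit disk and $t_{0,1}$ match $e^{\pm i\theta}$), and checking the degree count, or equivalently the proportionality constant, that guarantees the zero correspondence is onto.
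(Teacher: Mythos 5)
Your proposal is correct and takes essentially the same route as the paper: the paper offers no separate proof of Proposition \ref{prop:zerosQ}, stating only that it is equivalent to Proposition \ref{prop:zerodenom} under the identification $\zeta_{k}=q_{k}e^{-i\theta}=t_{k}/\tau$, with the zero correspondence $P(t_{k})+zt_{k}^{r}=0\iff Q(\zeta_{k})=0$ already established in the discussion preceding that proposition. Your write-up just makes this dilation argument explicit.
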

We focus on the values of $\theta$ where $\cos(m+r)\theta=\pm1$
and $0<\theta<\pi/r$, i.e., 
\begin{equation}
\theta=\frac{h\pi}{m+r}\label{eq:thetaform}
\end{equation}
where $h=1,\ldots,\left\lfloor m/r\right\rfloor $. We will show in
Proposition \ref{prop:signchangegeneral} that the sign of $H(\pi/r^{-})$
is $(-1)^{\left\lfloor m/r\right\rfloor +1}$, and that at the values
of $\theta$ given in (\ref{eq:thetaform}), the sign of $H(\theta)$
is $(-1)^{h}$ when $m$ is large. Assuming this fact, the proof of
Theorem \ref{maintheorem} is now simple. By the Intermediate Value
Theorem, $H(\theta)$ has at least $\left\lfloor m/r\right\rfloor $
solutions $\theta$, each of which yields a real solution $z$ of
$H_{m}(z)$ on $(0,\infty)$ by Lemma \ref{lem:zfunctheta}. Theorem
\ref{maintheorem} follows from the fact that the degree of $H_{m}(z)$
is at most $\left\lfloor m/r\right\rfloor $. 
\begin{prop}
\label{prop:signchangegeneral}Suppose $n,r\in\mathbb{N}$ and $\theta$
is given in \eqref{eq:thetaform}. Then 
\begin{itemize}
\item[(i)] $\textrm{sgn}\left(H(\theta)\right)=(-1)^{h}$, and 
\item[(ii)] $\textrm{sgn}\left(H(\pi/r^{-})\right)=(-1)^{\left\lfloor m/r\right\rfloor +1}$ 
\end{itemize}
for all $m$ sufficiently large. 
\end{prop}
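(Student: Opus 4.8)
The plan is to write $H(\theta)=M(\theta)+R(\theta)$, where $M(\theta)$ is the contribution \eqref{eq:mainterm} of the two unit–circle zeros $\zeta_{0},\zeta_{1}=e^{\mp i\theta}$ and $R(\theta)=\sum_{k\ge 2}\bigl(\zeta_{k}^{m+1}Q'(\zeta_{k})\bigr)^{-1}$ collects the remaining zeros. At a sample point \eqref{eq:thetaform} one has $\cos(m+r)\theta=(-1)^{h}$ and $\sin(m+r)\theta=0$, so
\[
M(\theta)=\frac{2A(\theta)}{|Q'(e^{i\theta})|^{2}}(-1)^{h};
\]
since $A(\theta)>0$ by Lemma \ref{lem:Athetapositive}, the main term already carries the sign $(-1)^{h}$, and part (i) reduces to the estimate $|R(\theta)|<|M(\theta)|$, uniformly in $h$, for all large $m$.

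On a fixed compact subinterval of $(0,\pi/r)$ this is immediate: Proposition \ref{prop:zerosQ} and compactness give $|\zeta_{k}|\ge 1+\delta$ for $k\ge 2$, so $|R|$ decays geometrically while $A(\theta)$, hence $|M|$, is bounded below. Near $\theta=0$ the main term can be small, but Lemma \ref{lem:AthetaAsymp} gives $A(\theta)>\theta^{4}$, whence $|M|\gtrsim m^{-4}$ at the sample points; this still dominates the geometrically small $|R|$, because the $t$–zeros other than $t_{0},t_{1}$ converge to the remaining (distinct) zeros of $P(t)+at^{r}$ and so keep the corresponding $\zeta_{k}$ bounded away from the unit circle.

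The real difficulty — the crux of (ii) and of (i) for $\theta$ close to $\pi/r$ — is the confluence as $\theta\to\pi/r$: since $z(\theta)\to b=\infty$, exactly $r$ of the $t$–zeros collapse to $0$, the corresponding $\zeta_{k}$ tend to the $r$–th roots of $-1$, and the $M/R$ splitting loses its dominance. Here I would abandon the split and use the exact identity coming from the partial–fraction expansion, now retaining the leading coefficient $c_{\zeta}$ of $Q$:
\[
H(\theta)=-\frac{c}{c_{\zeta}}\,\tau^{\,m+\max\{n,r\}}\,H_{m}\bigl(z(\theta)\bigr),
\]
$c$ being the leading coefficient of $P(t)+zt^{r}$ in $t$. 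The sign of $H$ is then governed by $\textrm{sgn}\,H_{m}(z)$ for large $z$, i.e. by the sign of the leading coefficient $\lambda_{m}$ of $H_{m}$, and the key auxiliary fact is $\textrm{sgn}\,\lambda_{m}=(-1)^{\lfloor m/r\rfloor}$ (so in particular $\deg H_{m}=\lfloor m/r\rfloor$). I would prove this from the recurrence \eqref{eq:recurrence}: grouping indices by residue mod $r$, the vectors $\Lambda^{(q)}$ of top–degree coefficients of $H_{qr},\dots,H_{qr+r-1}$ satisfy $A\Lambda^{(q)}=-\Lambda^{(q-1)}$, where $A$ is the $r\times r$ lower–triangular Toeplitz matrix with entries $a_{0},\dots,a_{r-1}$. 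Since $a_{0}=P(0)>0$, and since $A^{-1}$ is the lower–triangular Toeplitz matrix whose entries are the first Taylor coefficients of $1/P$ — all strictly positive, as are the entries of $\Lambda^{(0)}=(H_{0}(0),\dots,H_{r-1}(0))$ — the matrix $(-A^{-1})^{q}$ has every entry of sign $(-1)^{q}$, giving $\textrm{sgn}\,\lambda_{m}=(-1)^{\lfloor m/r\rfloor}$.

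Granting this, (ii) drops out by letting $\theta\to\pi/r$: then $\tau\to 0^{+}$, $z\sim P(0)/\tau^{r}\to+\infty$, and a short case analysis shows $\textrm{sgn}(-c/c_{\zeta})=-1$ throughout a left neighborhood of $\pi/r$ (one uses $c_{\zeta}\to 1$ when $r\ge n$, and $\textrm{sgn}\,c_{\zeta}=(-1)^{n}=\textrm{sgn}\,c$ when $r<n$); hence $\textrm{sgn}\,H(\pi/r^{-})=-\textrm{sgn}\,\lambda_{m}=(-1)^{\lfloor m/r\rfloor+1}$. The part I expect to be hardest is the remaining claim in (i) for the $O(1)$ sample points lying in the confluence window $|\pi/r-\theta|=O(1/m)$, where the $M/R$ estimate fails and the closed form is circular (it presupposes the location of $z(\theta_{h})$ relative to the zeros of $H_{m}$). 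I would attack these by a dedicated local expansion of $H(\theta)$ in $\phi=\pi/r-\theta$, treating the sum over the $r$ coalescing, well–separated near–roots of $-1$ as an oscillatory trigonometric sum and showing that, combined with $\cos(m+r)\theta_{h}=(-1)^{h}$, it produces the sign $(-1)^{h}$; this is precisely what makes the final $\lfloor m/r\rfloor$ sign changes — in particular the one between the last sample point and $\pi/r^{-}$ — add up correctly.
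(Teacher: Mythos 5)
Your splitting of $H$ into the unit-circle term \eqref{eq:mainterm} plus a tail, and your treatment on compact subintervals, match the paper's argument; and your Toeplitz-matrix lemma giving $\textrm{sgn}\,\lambda_{m}=(-1)^{\lfloor m/r\rfloor}$ (hence $\deg H_{m}=\lfloor m/r\rfloor$ exactly) is correct and is a genuinely nice ingredient the paper does not have. Nevertheless there are two concrete gaps, each located exactly where the problem is hardest.

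First, near $\theta=0$ your claim that the $t$--zeros other than $t_{0},t_{1}$ ``keep the corresponding $\zeta_{k}$ bounded away from the unit circle'' fails whenever the smallest zero $\tau_{1}$ of $P$ has multiplicity $\rho>1$. In that case $rP(t)-tP'(t)$ has no zero in $(0,\tau_{1})$ and vanishes at $\tau_{1}$, so $t_{a}=\tau_{1}$ and $a=-P(t_{a})/t_{a}^{r}=0$: the limiting polynomial is $P(t)$ itself, and exactly $\rho$ zeros of $P(t)+z(\theta)t^{r}$ coalesce at $t_{a}$, i.e. $\rho$ of the $\zeta_{k}$ tend to $1$. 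The tail $R$ then does not decay geometrically relative to $M$, and in addition Lemma \ref{lem:AthetaAsymp} is stated only for $\rho=1$, so the bound $A(\theta)>\theta^{4}$ you invoke is unavailable. The paper's Case 2 is devoted almost entirely to this configuration: a local expansion $\zeta_{k}=1+\frac{\cos(\pi/\rho)-\omega_{k}}{\sin(\pi/\rho)}\theta+\mathcal{O}(\theta^{2})$ with $\omega_{k}$ the $\rho$-th roots of $-1$, the computation $Q'(\zeta_{k})=\zeta_{k}^{r-1}\bigl(\rho\sin(\pi/\rho)/(\omega_{k}\theta)+\mathcal{O}(1)\bigr)$, and a reduction of $H$ to the oscillatory sum $\sum_{k<\rho}\omega_{k}\zeta_{k}^{-(m+r)}$, whose sign is taken from (35) of \cite{ft}. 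Nothing in your proposal replaces this.

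Second, your route to (ii) presupposes $z(\theta)\to\infty$ as $\theta\to\pi/r$, which holds only when $t_{b}=0$, i.e. $r>1$. For $r=1$ one has $t_{b}<0$, $\tau\to|t_{b}|>0$ and $z(\theta)\to b=-P(t_{b})/t_{b}<\infty$, so $\textrm{sgn}\,H_{m}(z(\theta))$ near $\pi^{-}$ is \emph{not} governed by the leading coefficient: reading it off $\lambda_{m}$ would require knowing that all real zeros of $H_{m}$ lie below $b$, which is precisely the kind of statement being proved (circular). The paper instead keeps the $M/R$ split alive for $r=1$ near $\pi$ by showing that, apart from the double zero at $t_{b}$, the zeros of $P(t)+bt$ have modulus different from $|t_{b}|$ (a triple zero would force $P''(t_{b})=0$, impossible since $t_{b}\le0$), and then uses $A(\theta)>(\pi-\theta)^{4}$ from Lemma \ref{lem:AthetaAsymp}. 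Finally, for $r>1$ you correctly identify the sample points \eqref{eq:thetaform} with $\pi/r-\theta=O(1/m)$ as the crux, but there you only announce a plan: the actual determination of the sign of $\frac{1}{r}\sum_{k<r}\zeta_{k}^{-(m+r)}$ over the perturbed roots of $-1$ is the whole difficulty, and it is what the paper carries out (via the expansion of $\zeta_{k}$ in $\eta=\pi/r-\theta$ and (41) of \cite{ft}). As written, the proposal establishes (i) only in the regimes where $\rho=1$ and $\theta$ stays away from $\pi/r$, and (ii) only for $r>1$.
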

To prove this proposition, for large $m$ we consider three cases
when $\theta$ is bounded away from both $0$ and $\pi/r$, when $\theta$
approaches $0$, and when $\theta$ approaches $\pi/r$.

\subsection*{Case 1: $\gamma<\theta<\pi/r-\gamma$ for some fixed small $\gamma$}

Proposition \ref{prop:zerosQ} implies that if $2\le k<\max\left\{ n,r\right\} $
then $|\zeta_{k}|>1+\epsilon$ for some fixed $\epsilon$. Thus in
\eqref{eq:Htheta} the sum 
\[
\sum_{k=2}^{\max\left\{ n,r\right\} }\frac{1}{\zeta_{k}^{m+1}Q'(\zeta_{k})}
\]
approaches $0$ exponentially fast when $m$ is large. The sign of
$H(\theta)$ is then determined by the sum of the first two terms
given in \eqref{eq:mainterm} if this sum does not approach $0$.
Since $A(\theta)>0$ by Lemma \ref{lem:Athetapositive}, the sign
of $H(\theta)$ when $\theta=h\pi/(m+r)$ is $(-1)^{h}$.

\subsection*{Case 2: $\theta\rightarrow0$ as $m\rightarrow\infty$}

We will show that when $\rho=1$ we still have $|\zeta_{k}|>1+\epsilon$
when $2\le k<\max\left\{ n,r\right\} $ and thus arguments in Case
1 apply since $A(\theta)$ approaches $0$ with a polynomial rate
by Lemma \ref{lem:AthetaAsymp}. When $\theta\rightarrow0$, the polynomial
$P(t)+zt^{r}$ approaches $P(t)+at^{r}$ with a real multiple zero
at $t_{a}>\tau_{1}$. We need to show that besides the double zero
at $t_{a}$, all the zeros of $P(t)+at^{r}$ lie outside the closed
disk centered at the origin with radius $t_{a}$. From Proposition
\ref{prop:zerodenom}, it suffices to show that besides the double
zero at $t_{a}$, the moduli of the others zeros are not $t_{a}$.
The fact that these moduli are not $t_{a}$ follow directly from the
inequality 
\[
\frac{\prod_{k=1}^{n}|t-\tau_{k}|}{|t^{r}|}\ge\frac{\prod_{k=1}^{n}|t_{a}-\tau_{k}|}{t_{a}^{r}}
\]
whenever $|t|=|t_{a}|$ and the equality holds only when $t=t_{a}$.
Since the multiplicity of $t_{a}$ is at least $2$, we have that
$P(t_{a})+at_{a}^{r}=0$ and $P'(t_{a})+rat_{a}^{r-1}=0$. These two
equations imply that $rP(t_{a})-t_{a}P'(t_{a})=0$. If the multiplicity
of $t_{a}$ is higher than $2$ then $P''(t_{a})+r(r-1)at_{a}^{r-2}=0$
which can be combined with $P'(t_{a})+rat_{a}^{r-1}=0$ to give $(r-1)P'(t_{a})-t_{a}P''(t_{a})=0$.
By the interlacing property of the zeros of $P'(t)$ and $P''(t)$,
$t_{a}$ must lie between the two smallest zeros of $P'(t)$ and $P''(t)$.
Similarly the equation $rP(t_{a})-t_{a}P'(t_{a})=0$ implies that
$t_{a}$ lies between the two smallest zeros of $P(t)$ and $P'(t)$.
This only occurs when $P'(t_{a})=P''(t_{a})=0$ which then implies
that $P(t_{a})=0$, a contradiction to $\rho=1$.

Next, we consider the case when $\rho>1$. Since $\sum_{j=1}^{n}\theta_{j}=(n-1)\pi+r\theta$,
we have $\theta_{j}\rightarrow\pi$ when $j>\rho$ and $\theta_{1}=\theta_{2}=\cdots=\theta_{\rho}\rightarrow\pi-\pi/\rho$,
see Figure \ref{fig:ZerosOutsideDiskbigrho}. Let $\eta_{j}=\pi-\pi/\rho-\theta_{j}$
for $1\le j\le\rho$ and $\eta_{j}=\pi-\theta_{j}$ if $j\ge\rho$.
Recall that $\theta_{j}$, $1\le j\le n$, satisfy 
\[
\tau_{1}\frac{\sin\theta_{1}}{\sin(\theta_{1}-\theta)}=\cdots=\tau_{\rho+1}\frac{\sin\theta_{\rho+1}}{\sin(\theta_{\rho+1}-\theta)}=\cdots=\tau_{n}\frac{\sin\theta_{n}}{\sin(\theta_{n}-\theta)}.
\]
We have 
\begin{eqnarray*}
\frac{\sin\theta_{1}}{\sin(\theta_{1}-\theta)} & = & \frac{\sin(\pi/\rho+\eta_{1})}{\sin(\pi/\rho+\eta_{1}+\theta)}\\
 & = & \frac{\sin(\pi/\rho)+\cos(\pi/\rho)\eta_{1}+\mathcal{O}(\eta_{1}^{2})}{\sin(\pi/\rho)+\cos(\pi/\rho)(\eta_{1}+\theta)+\mathcal{O}(\eta_{1}^{2}+\eta_{1}\theta+\theta^{2})}\\
 & = & 1-\cot\frac{\pi}{\rho}\theta+\mathcal{O}(\eta_{1}^{2}+\eta_{1}\theta+\theta^{2})
\end{eqnarray*}
and the corresponding fraction when $j>\rho$ is 
\[
\frac{\sin\theta_{j}}{\sin(\theta_{j}-\theta)}=\frac{\eta_{j}+\mathcal{O}(\eta_{j}^{3})}{\eta_{j}+\theta+\mathcal{O}((\eta_{j}+\theta)^{3})}=\frac{\eta_{j}}{\eta_{j}+\theta}\left(1+\mathcal{O}(\eta_{j}^{2}+\theta^{2}+\eta_{j}\theta)\right).
\]
The identity $\tau_{1}\sin\theta_{1}/\sin(\theta_{1}-\theta)=\tau_{j}\sin\theta_{j}/\sin(\theta_{j}-\theta)$
gives 
\[
(\eta_{j}+\theta)\tau_{1}-\tau_{1}\cot\frac{\pi}{\rho}\theta(\eta_{j}+\theta)=\tau_{j}\eta_{j}+\mathcal{O}((\eta_{1}+\eta_{j}+\theta)^{3})
\]
from which we solve for $\eta_{j}$, $j>\rho$ 
\begin{eqnarray*}
\eta_{j} & = & \frac{\tau_{1}\left(\theta-\cot(\pi/\rho)\theta^{2}\right)}{\tau_{j}-\tau_{1}+\tau_{1}\cot(\pi/\rho)\theta}+\mathcal{O}((\eta_{1}+\theta)^{3})\\
 & = & \left(\frac{\tau_{1}}{\tau_{j}-\tau_{1}}\theta-\tau_{1}\frac{\cot(\pi/\rho)\theta^{2}}{\tau_{j}-\tau_{1}}\right)\left(1-\frac{\tau_{1}\cot(\pi/\rho)\theta}{\tau_{j}-\tau_{1}}\right)+\mathcal{O}((\eta_{1}+\theta)^{3})\\
 & = & \frac{\tau_{1}}{\tau_{j}-\tau_{1}}\theta-\frac{\cot(\pi/\rho)\tau_{1}\tau_{j}}{(\tau_{j}-\tau_{1})^{2}}\theta^{2}+\mathcal{O}((\eta_{1}+\theta)^{3}).
\end{eqnarray*}
The equation $\sum_{j=1}^{n}\eta_{j}=-r\theta$ implies that 
\[
\rho\eta_{1}+\left(\sum_{j>\rho}\frac{\tau_{1}}{\tau_{j}-\tau_{1}}+r\right)\theta-\sum_{k>\rho}\frac{\cot(\pi/\rho)\tau_{1}\tau_{j}}{(\tau_{j}-\tau_{1})^{2}}\theta^{2}+\mathcal{O}(\theta^{3})=0.
\]
We recall that if $\zeta$ is a zero of $Q(\zeta)$ then 
\[
\prod_{j=1}^{n}\left(\frac{\sin(\theta_{j}-\theta)}{\sin\theta_{j}}-\zeta\right)+\zeta^{r}\prod_{j=1}^{n}\frac{\sin\theta}{\sin\theta_{j}}=0.
\]
When $\theta\rightarrow0$ the left side approaches the polynomial
$(1-\zeta)^{\rho}\prod_{j>\rho}(\tau_{j}/\tau_{1}-\zeta)$ and thus
it suffices to consider the sum 
\[
\sum_{k=0}^{\rho-1}\frac{1}{\zeta_{k}^{m+1}Q'(\zeta_{k})}
\]
where $\zeta_{0},\ldots,\zeta_{\rho-1}$ approach $1$. Let $\zeta=1+\epsilon$,
$\epsilon\in\mathbb{C}$. If $\zeta$ is a zero of $Q(\zeta)$ then
\[
\left(\cot\frac{\pi}{\rho}\theta-\epsilon\right)^{\rho}\prod_{j>\rho}\left(\frac{\tau_{j}}{\tau_{1}}-1\right)(1+\mathcal{O}(\theta+\epsilon))+\frac{\theta^{\rho}}{\sin(\pi/\rho)^{\rho}}\prod_{j>\rho}\frac{\tau_{j}-\tau_{1}}{\tau_{1}}(1+\mathcal{O}(\theta+\epsilon))=0.
\]
We cancel the common factor and obtain 
\[
\theta\cot\frac{\pi}{\rho}-\epsilon=\omega_{k}\frac{\theta}{\sin(\pi/\rho)}+\mathcal{O}((\theta+\epsilon)^{2})
\]
where $\omega_{k}=e^{(2k-1)\pi i/n}$, $0\le k<n$, are the $n$-th
root of $-1$. We solve for $\epsilon$ 
\[
\epsilon=\frac{\cos(\pi/\rho)-\omega_{k}}{\sin(\pi/\rho)}\theta+\mathcal{O}(\theta^{2}).
\]
Then \eqref{eq:Qprimeatzero} gives 
\begin{eqnarray*}
Q'(\zeta_{k}) & = & \sum_{j=1}^{n}\frac{\zeta_{k}^{r}\sin\theta_{j}}{\sin(\theta_{j}-\theta)-\zeta_{k}\sin\theta_{j}}+r\zeta_{k}^{r-1}\\
 & = & \zeta_{k}^{r-1}\left(\frac{\rho\sin(\pi/\rho)}{\omega_{k}\theta}+\mathcal{O}(1)\right).
\end{eqnarray*}
and consequently 
\[
\sum_{k=0}^{\rho-1}\frac{1}{\zeta_{k}^{m+1}Q'(\zeta_{k})}=\frac{\theta}{\rho\sin(\pi/\rho)}\sum_{k=0}^{\rho-1}\frac{\omega_{k}}{\zeta_{k}^{m+r}}\left(1+\mathcal{O}(\theta)\right)
\]
whose sign is $(-1)^{h}$ by (35) of \cite{ft}.

\subsection*{Case 3: $\theta\rightarrow\pi/r$ as $m\rightarrow\infty$}

Similar to arguments in the beginning of Case 2, when $r=1$, we claim
that $|\zeta_{k}|>1+\epsilon$ for $2\le k<n$ and with Lemma \ref{lem:AthetaAsymp}
the arguments in Case 1 apply. We prove this claim by showing that
besides the double zero at $t_{b}$, all the other zeros of $P(t)+bt$
lie outside the circle radius $|t_{b}|$ or equivalently (with Proposition
\ref{prop:zerodenom}) their moduli are different from $|t_{b}|$.
The claim about their moduli again follows from the inequality 
\[
\frac{\prod_{k=1}^{n}|t-\tau_{k}|}{|t^{r}|}\le\frac{\prod_{k=1}^{n}|t_{b}-\tau_{k}|}{t_{b}^{r}}
\]
when $|t|=|t_{b}|$ with the equality holds only when $t=t_{b}$.
If the multiplicity of $t_{b}$ is more than $2$ then $P''(t_{b})=0$.
This is a contradiction since the zeros of $P''(t)$ are positive
while $t_{b}\le0$ by its definition.

Next, we consider when $r>1$. Since $\sum_{j=1}^{n}\theta_{j}=r\theta+(n-1)\pi$,
we must have $\theta_{j}$, $1\le j\le n$, approach $\pi$. Let $\eta_{j}=\pi-\theta_{j}$,
$1\le j\le n$, and $\eta=\pi/r-\theta$. From the equations 
\begin{eqnarray*}
\tau & = & \tau_{j}\frac{\sin\theta_{j}}{\sin(\theta_{j}-\theta)}\\
 & = & \tau_{j}\frac{\eta_{j}+\mathcal{O}(\eta_{j}^{3})}{\sin(\pi/r)+\cos(\pi/r)(\eta_{j}-\eta)+\mathcal{O}((\eta_{j}+\eta)^{2})}\\
 & = & \frac{\tau_{j}\eta_{j}}{\sin(\pi/r)}\left(1-\cot\frac{\pi}{r}(\eta_{j}-\eta)+\mathcal{O}((\eta_{j}+\eta)^{2})\right)
\end{eqnarray*}
and $\sum_{j=1}^{n}\eta_{j}=r\eta$ we have 
\[
\tau\sin\frac{\pi}{r}\sum_{j=1}^{n}\frac{1}{\tau_{j}}=r\eta(1+\mathcal{O}(\eta_{j}+\eta)).
\]
Thus for any $1\le k\le n$ 
\[
\eta_{k}\tau_{k}\sum_{j=1}^{n}\frac{1}{\tau_{j}}=r\eta(1+\mathcal{O}(\eta)).
\]
The formula \eqref{eq:Qzeta} implies that besides $r$ zeros $\zeta_{0},\ldots,\zeta_{r-1}$
of $Q(\zeta)$ which approach the zeros of $1+\zeta^{r}$, the possible
remaining $\max\left\{ n,r\right\} -r$ zeros approach $\infty$.
Thus it suffices to consider the sign of 
\[
\sum_{k=0}^{r-1}\frac{1}{\zeta_{k}^{m+1}Q'(\zeta_{k})}.
\]
With $\zeta_{k}=e_{k}+\epsilon$, $\epsilon\in\mathbb{C}$, $e_{k}=e^{(2k-1)\pi i/r}$,
the equation $Q(\zeta_{k})=0$ gives 
\begin{eqnarray*}
0 & = & \prod_{j=1}^{n}\left(\frac{\sin(\theta_{j}-\theta)}{\sin\theta}-\zeta_{k}\frac{\sin\theta_{j}}{\sin\theta}\right)+\zeta_{k}^{r}\\
 & = & \prod_{j=1}^{n}\left(\frac{\sin(\pi/r)+\cos(\pi/r)(\eta_{j}-\eta)+\mathcal{O}(\eta^{2})}{\sin(\pi/r)-\cos(\pi/r)\eta+\mathcal{O}(\eta^{2})}-\zeta_{k}\frac{\eta_{j}+\mathcal{O}(\eta^{3})}{\sin(\pi/r)-\cos(\pi/r)\eta+\mathcal{O}(\eta^{2})}\right)+\zeta_{k}^{r}\\
 & = & \prod_{j=1}^{n}\left(1+\cot\frac{\pi}{r}\eta_{j}-e_{k}\frac{\eta_{j}}{\sin(\pi/r)}+\mathcal{O}(\eta^{2}+\epsilon\eta)\right)+(e_{k}+\epsilon)^{r}\\
 & = & \frac{\cos(\pi/r)-e_{k}}{\sin(\pi/r)}\sum_{j=1}^{n}\eta_{j}+\frac{r\epsilon}{e_{k}}+\mathcal{O}\left(\eta^{2}+\epsilon\eta+\eta^{2}\right)
\end{eqnarray*}
from which we solve for $\epsilon$ 
\[
\epsilon=\frac{e_{k}(\cos(\pi/r)-e_{k})\eta}{\sin(\pi/r)}+\mathcal{O}(\eta^{2}).
\]
From \eqref{eq:Qprimeatzero}, we obtain 
\begin{eqnarray*}
Q'(\zeta_{k}) & = & \sum_{j=1}^{n}\frac{\zeta_{k}^{r}\sin\theta_{j}}{\sin(\theta_{j}-\theta)-\zeta_{k}\sin\theta_{j}}+r\zeta_{k}^{r-1}\\
 & = & r\zeta_{k}^{r-1}+\mathcal{O}(\eta)
\end{eqnarray*}
and thus the sign of 
\[
\sum_{k=0}^{r-1}\frac{1}{\zeta_{k}^{m+1}Q'(\zeta_{k})}=\frac{1}{r}\sum_{k=0}^{r-1}\frac{1+\mathcal{O}(\eta)}{\zeta_{k}^{m+r}}
\]
is $(-1)^{h}$ by (41) of \cite{ft}.

\section{Open problems}

Theorem \ref{maintheorem} asserts that there is a constant $C$,
depending on both $P(t)$ and $r$, such that the zeros of $H_{m}(z)$
lie in the real interval $(a,b)\subseteq(0,\infty)$ for all $m\ge C$.
We conjecture that the zeros of $H_{m}(z)$ lie in the interval $(a,b)$
for all $m$. 
\begin{conjecture}
Suppose $P(t)$ is a real polynomial whose zeros are positive real
numbers and $P(0)>0$. Let $r$ be a positive integer such that $\max\{\deg P,r\}>1$.
For all integers $m$, the zeros of the polynomial $H_{m}(z)$ generated
by 
\[
\sum_{m=0}^{\infty}H_{m}(z)t^{m}=\frac{1}{P(t)+zt^{r}}
\]
lie in the interval $(a,b)$ defined in Theorem \ref{maintheorem}.
Moreover, the set $\mathcal{Z}=\bigcup_{m\gg1}\{z\ |\ H_{m}(z)=0\}$
is dense on $(a,b)$. 
\end{conjecture}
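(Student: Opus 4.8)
The plan is to observe that the Conjecture differs from Theorem~\ref{maintheorem} \emph{only} in strengthening ``for all large $m$'' to ``for all $m$,'' since the density assertion (still phrased with $m\gg1$) is already supplied by the Theorem. Moreover, every structural ingredient behind Theorem~\ref{maintheorem} holds for each fixed $m\ge1$: the degree bound $\deg H_m\le\lfloor m/r\rfloor$ (Lemma~\ref{lem:degreeH}), the strictly monotone bijection $z:(0,\pi/r)\to(a,b)$ (Lemmas~\ref{lem:zfunctheta} and~\ref{lem:zonto}), the equivalence $H_m(z(\theta))=0\iff H(\theta)=0$, and the fact that the nontrivial zeros of $Q$ lie outside the closed unit disk (Proposition~\ref{prop:zerosQ}), which is itself $m$-independent. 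Consequently, if one can exhibit $\lfloor m/r\rfloor$ sign changes of the continuous function $H(\theta)$ on $(0,\pi/r)$, the resulting $\lfloor m/r\rfloor$ distinct real zeros $z(\theta)\in(a,b)$ saturate the degree bound and thereby force \emph{every} zero of $H_m$ to be real and to lie in $(a,b)$. Thus the whole Conjecture reduces to proving Proposition~\ref{prop:signchangegeneral} --- the alternation $\mathrm{sgn}\,H(h\pi/(m+r))=(-1)^h$ together with $\mathrm{sgn}\,H(\pi/r^-)=(-1)^{\lfloor m/r\rfloor+1}$ --- for \emph{every} $m\ge1$ rather than only for $m\gg1$.

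I would carry this out by making the sign analysis quantitative and uniform in $m$. At the sampling point $\theta_h:=h\pi/(m+r)$ one has $\sin(m+r)\theta_h=0$ and $\cos(m+r)\theta_h=(-1)^h$, so by \eqref{eq:mainterm} the two conjugate main terms of \eqref{eq:Htheta} contribute exactly $(-1)^h\,2A(\theta_h)/|Q'(e^{i\theta_h})|^2$, a quantity of sign $(-1)^h$ whose magnitude is controlled below by the strict positivity of $A(\theta)$ from Lemma~\ref{lem:Athetapositive}. The remaining contribution is the tail $T_m(\theta):=\sum_{k=2}^{\max\{n,r\}-1}\zeta_k^{-(m+1)}/Q'(\zeta_k)$, where $|\zeta_k|\ge 1+\delta(\theta)$ for a continuous $\delta>0$ by Proposition~\ref{prop:zerosQ}. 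The target is the pointwise domination
\[
\frac{2A(\theta_h)}{|Q'(e^{i\theta_h})|^2}\;>\;|T_m(\theta_h)|,\qquad h=1,\dots,\lfloor m/r\rfloor,
\]
valid for all $m$, together with the analogous endpoint estimate at $\pi/r^-$; the alternation and hence the zero count then follow from the Intermediate Value Theorem exactly as in the large-$m$ argument. To secure this I would derive explicit, $m$-independent lower bounds for $|\zeta_k|$ and upper bounds for $1/|Q'(\zeta_k)|$ in terms of $\theta$ and the data $(\tau_1,\dots,\tau_n,r)$, and combine them with the lower bound on $A$.

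The main obstacle I anticipate is exactly the regime that forced $m\gg1$ in the Theorem: the smallest sampling point $\theta_1=\pi/(m+r)$ tends to $0$, and there the main term is small --- $A(\theta)=\mathcal{O}(\theta^4)$ near $0$ by Lemma~\ref{lem:AthetaAsymp} --- while the tail is only weakly damped, since $|\zeta_k|^{m+1}$ is modest for moderate $m$ and $\delta(\theta)\to0$ as $\theta\to0$. A crude triangle-inequality estimate on $T_m$ will not close this gap near the lower endpoint, so I would track the joint behavior of $A(\theta)$, $\delta(\theta)$, and $Q'$ as $\theta\to0$ (refining Lemma~\ref{lem:AthetaAsymp}) to show that the ratio $|T_m(\theta_h)|\cdot|Q'(e^{i\theta_h})|^2/A(\theta_h)$ stays below $1$ uniformly in $m$ and $h$, disposing of the genuinely small cases $\lfloor m/r\rfloor\le1$ (where $H_m$ is constant or linear) by direct computation. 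Should the domination estimate prove intractable for intermediate $m$, a robust fallback is to bypass the sampling argument and count the zeros of $H_m$ by the argument principle, integrating $H_m'/H_m$ over a contour in the $z$-plane following the image of $(0,\pi/r)$ under $z(\theta)$; an induction on $m$ via the $(\max\{n,r\}+1)$-term recurrence \eqref{eq:recurrence} is tempting but runs into the well-known failure of higher-order recurrences to preserve real-rootedness, which is precisely why this route was avoided in the first place.
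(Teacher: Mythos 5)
You should first be clear about the status of the statement: it is one of the paper's open \emph{conjectures}, not a theorem, and the paper supplies no proof of it. The authors prove only the weaker Theorem \ref{maintheorem} (all zeros real and in $(a,b)$ for $m\gg1$), and they state explicitly that extending this to all $m$ is open. So there is no ``paper proof'' for your attempt to match; the only question is whether your proposal actually closes the gap the authors could not. It does not. Your reduction is correct as far as it goes --- the density clause is already covered by Theorem \ref{maintheorem}, every auxiliary ingredient (Lemma \ref{lem:degreeH}, Lemmas \ref{lem:zfunctheta} and \ref{lem:zonto}, Proposition \ref{prop:zerosQ}, the equivalence $H_m(z(\theta))=0\iff H(\theta)=0$) holds for every fixed $m$, and the whole conjecture would indeed follow from Proposition \ref{prop:signchangegeneral} holding for \emph{all} $m\ge1$. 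But that uniform version is precisely the hard content, and your proposal does not prove it; it only names it as a ``target'' and lists strategies, together with fallbacks (argument principle, induction on the recurrence) that you yourself flag as unworkable or unformulated.

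The concrete gap is the one you anticipate but do not resolve. At the smallest sampling points $\theta_h=h\pi/(m+r)$ with $h$ fixed and $m$ moderate, the main term in \eqref{eq:mainterm} is bounded below only by $A(\theta)\gtrsim\theta^4\sim m^{-4}$ (Lemma \ref{lem:AthetaAsymp}, and only when $\rho=1$), while the tail $\sum_{k\ge2}\zeta_k^{-(m+1)}/Q'(\zeta_k)$ is controlled by $(1+\epsilon)^{-(m+1)}$ --- an estimate that is decisive as $m\to\infty$ but says nothing quantitative for, say, $m=7$, because no explicit $\epsilon$ or bound on $1/|Q'(\zeta_k)|$ in terms of $(\tau_1,\dots,\tau_n,r)$ is ever produced. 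Worse, when $\rho>1$ (or near $\theta=\pi/r$ when $r>1$) the nontrivial $\zeta_k$ are \emph{not} uniformly bounded away from the unit circle: the paper's Cases 2 and 3 handle these regimes by asymptotic expansions in $\theta=\mathcal{O}(1/m)$ and by invoking asymptotic sign results from \cite{ft}, all of which are intrinsically large-$m$ statements. A proof of the conjecture would need either genuinely new finite-$m$ estimates with explicit constants, or a different mechanism altogether (as in \cite{err} for the cubic case); your proposal supplies neither, so the statement remains exactly as open as the paper leaves it.
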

With a different approach, in \cite[Theorem 1]{err} the authors prove
that the zeros of the polynomial $H_{m}(z)$ generated by 
\[
\sum_{m=0}^{\infty}H_{m}(z)t^{m}=\frac{1}{1-zt+Ct^{2}+t^{3}}
\]
are real for all $m\ge1$ if $C\ge3$. On the other hand, if $C<3$,
then there exists an $m\ge1$ so that not all zeros of $H_{m}(z)$
are real (see \cite[Proposition 1]{bbs}). In a similar vein, \textit{The
6-Conjecture} \cite[Conjecture 2]{err} stipulates that the zeros
of the sequence of polynomials generated by 
\[
\sum_{m=0}^{\infty}H_{m}(z)t^{m}=\frac{1}{1-zt+Ct^{2}-4t^{3}+t^{4}}.
\]
are real if $C\ge6$. The authors of \cite{err} believe that a similar conclusion still
holds if the coefficients of the denominator follow a binomial pattern.
In light of Theorem \ref{maintheorem}, we conjecture a more general
result, by replacing the binomial polynomial by a polynomial, whose
zeros are real and of the same sign. 
\begin{conjecture}
Let $C$ be a real number and $r$,$s$ be natural numbers. Assume
that $P(t)$ is a polynomial whose zeros are real and positive. If
$C(s-r)\ge0$, then the zeros of the sequence of polynomials $H_{m}(z)$
generated by 
\[
\sum_{m=0}^{\infty}H_{m}(z)t^{m}=\frac{1}{P(t)+Ct^{s}+zt^{r}}
\]
are real. 
\end{conjecture}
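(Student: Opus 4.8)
The plan is to extract the sign of $H(\theta)$ from the decomposition of (\ref{eq:Htheta}) into the contribution of the two unit-modulus zeros $\zeta_{0,1}=e^{\pm i\theta}$ and that of the remaining zeros $\zeta_k$, $2\le k<\max\{n,r\}$. By Proposition \ref{prop:zerosQ} these latter zeros satisfy $|\zeta_k|>1$, so each corresponding term of (\ref{eq:Htheta}) is $O(|\zeta_k|^{-(m+1)})$ and decays geometrically in $m$ as long as $|\zeta_k|$ stays bounded away from $1$. The first two terms combine, through (\ref{eq:mainterm}), into
\[
\frac{2}{|Q'(e^{i\theta})|^{2}}\bigl(A(\theta)\cos(m+r)\theta-B(\theta)\sin(m+r)\theta\bigr),
\]
so at a point $\theta=h\pi/(m+r)$, where $\sin(m+r)\theta=0$ and $\cos(m+r)\theta=(-1)^{h}$, this principal part reduces to $2(-1)^{h}A(\theta)/|Q'(e^{i\theta})|^{2}$, whose sign is $(-1)^{h}$ because $A(\theta)>0$ by Lemma \ref{lem:Athetapositive}. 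The whole proposition thus reduces to proving that for large $m$ this principal part dominates the remainder.

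When $\theta$ ranges over a compact subinterval $[\gamma,\pi/r-\gamma]$, both $A(\theta)$ and $\min_{k\ge2}(|\zeta_k|-1)$ are bounded below by positive constants, so the remainder is $O((1+\epsilon)^{-m})$ while the principal part stays bounded away from $0$ in modulus; hence for large $m$ the principal part wins and $\mathrm{sgn}\,H(\theta)=(-1)^{h}$. This disposes of part (i) on any region bounded away from the endpoints.

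The real difficulty lies at $\theta\to0^{+}$ and $\theta\to\pi/r^{-}$, which govern the extreme index $h=1$, the endpoint sign of part (ii), and the index $h=\lfloor m/r\rfloor$. Here two estimates degrade at once: Lemma \ref{lem:AthetaAsymp} only furnishes the polynomial lower bound $A(\theta)>\theta^{4}$ (and $A(\theta)>(\pi/r-\theta)^{4}$ when $r=1$), and some outer zeros $\zeta_k$ may drift toward the unit circle, so geometric decay of the remainder can fail. I would treat $\theta\to0$ through a local analysis of the zeros of $Q(\zeta)$: when the least zero $\tau_1$ of $P$ is simple, $Q$ limits to a polynomial whose nontrivial zeros keep modulus $>1$, so the comparison of the previous paragraph persists with $A(\theta)>\theta^{4}$; when $\tau_1$ has multiplicity $\rho>1$, a cluster of $\rho$ zeros collapses to $\zeta=1$, and I would expand each as $\zeta=1+\epsilon$ with $\epsilon$ proportional to $\theta$ times a root of $-1$, find $Q'(\zeta_k)\sim\theta^{-1}$, and reduce $\mathrm{sgn}\,H$ to the sign of a finite exponential sum $\sum_k\omega_k\zeta_k^{-(m+r)}$.

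The endpoint $\theta\to\pi/r^{-}$ is handled analogously and produces part (ii): the $r$ zeros $\zeta_0,\dots,\zeta_{r-1}$ approach the zeros $e_k=e^{(2k-1)\pi i/r}$ of $1+\zeta^{r}$ while the other $\max\{n,r\}-r$ zeros escape to infinity, and expanding $\zeta_k=e_k+\epsilon$ with $\epsilon=O(\eta)$, $\eta=\pi/r-\theta$, gives $Q'(\zeta_k)\sim re_k^{r-1}$ and collapses $\mathrm{sgn}\,H$ to that of $\tfrac1r\sum_{k=0}^{r-1}e_k^{-(m+r)}$. In both clustering regimes the surviving exponential sums are exactly those evaluated explicitly in \cite{ft}, so I would cite its sign computations to conclude $\mathrm{sgn}\,H(\theta)=(-1)^{h}$ as $\theta\to0$ and $\mathrm{sgn}\,H(\pi/r^{-})=(-1)^{\lfloor m/r\rfloor+1}$. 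The crux throughout is the matched asymptotics of the zeros clustering near the unit circle, where the clean principal/remainder split breaks down and one must retain subleading terms to pin down the sign.
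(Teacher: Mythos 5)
There is a fundamental mismatch here: the statement you were asked to prove is the paper's second \emph{Conjecture} in the closing section --- an open problem that the paper itself does not prove --- and your proposal does not actually engage with it. What you have written is, essentially, a sketch of the paper's own proof of Proposition \ref{prop:signchangegeneral}: the decomposition of $H(\theta)$ in (\ref{eq:Htheta}) into the unit-modulus pair $e^{\pm i\theta}$ plus the outer zeros, the use of Proposition \ref{prop:zerosQ} and Lemma \ref{lem:Athetapositive} on compact subintervals, and the matched asymptotics of the clustering zeros near $\theta\to0$ and $\theta\to\pi/r$ with the sign extracted from the exponential sums of \cite{ft}. That argument lives entirely in the setting of the generating function $1/(P(t)+zt^{r})$, i.e.\ the case $C=0$. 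Nowhere in your proposal do the parameters $C$ and $s$, or the hypothesis $C(s-r)\ge0$, appear.

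This is not a cosmetic omission. The conjecture's denominator is $P(t)+Ct^{s}+zt^{r}$, and the polynomial $P(t)+Ct^{s}$ need not have real zeros at all, let alone positive ones. Every tool in your outline presupposes the factorization $P(t)=|a_{n}|\prod_{k=1}^{n}(\tau_{k}-t)$ with $\tau_{k}>0$ real: the angles $\theta_{k}$ of (\ref{eq:theta_kdef}), the implicit-function construction in Lemma \ref{lem:thetatuple}, the positivity of $A(\theta)$ in Lemma \ref{lem:Athetapositive} (which rests on interlacing of the zeros of $P$ and $P'$), and the location result Proposition \ref{prop:zerosQ}. None of these are available once the $Ct^{s}$ term is added, which is precisely why the statement is left as a conjecture rather than derived as a corollary of Theorem \ref{maintheorem}. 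Moreover, even in the special cases where $P(t)+Ct^{s}$ happens to have only positive real zeros, the paper's method yields hyperbolicity only for $m\gg1$, whereas the conjecture asserts reality of the zeros of every $H_{m}(z)$; your proposal inherits that same limitation. To make progress on the actual statement you would need a genuinely new idea for handling denominators whose ``hyperbolic part'' is perturbed by a monomial of arbitrary sign, not a rehearsal of the $C=0$ sign-change argument.
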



\begin{thebibliography}{1}
\bibitem[1]{bbs} Borcea J., Bøgvad R., Shapiro B., On rational approximation
of algebraic functions,\textit{ Adv. Math.} 204 (2006), no. 2, 448–480.

\bibitem[2]{err}Eğecioğlu Ö., Redmond T., Ryavec C., From a Polynomial
Riemann Hypothesis to Alternating Sign Matrices,\textit{ Electron.
J. Combin.} 8 (2001), no. 1., Research Paper 36, 51 pp. (electronic).

\bibitem[3]{ft}Forg\'acs T., Tran K., Polynomials with rational generating
functions and real zeros, \textit{J. Math. Anal. Appl.} {\bf 443} (2016) pp. 631-651.

\bibitem[4]{hormander} Hörmander, L., An Introduction to Complex
Analysis in Several Variables, 3rd ed. 1990, North Holland.

\bibitem[5]{rainville} Rainville,~E.~D., Special Functions. 1960,
McMillan, NY.

\bibitem[6]{tran} Tran K., Connections between discriminants and
the root distribution of polynomials with rational generating function,
\textit{J. Math. Anal. Appl.} 410 (2014), pp. 330-340. 
\end{thebibliography}
\end{document}